\documentclass[12pt]{amsart}
\usepackage{fullpage}
\usepackage{times}
\usepackage{latexsym}
\usepackage{amssymb}
\usepackage{pinlabel}
\usepackage[all]{xy}

\usepackage{xcolor}

\pagestyle{headings}
\setlength{\headheight}{6.5pt}
\setlength{\headsep}{0.5cm}

\newtheorem{thm}{Theorem}[section]

\newtheorem{cor}[thm]{Corollary}

\newtheorem{lem}[thm]{Lemma}
\newtheorem{que}[thm]{Question}

\theoremstyle{remark}
\newtheorem{rem}[thm]{Remark}

\usepackage{hyperref}

\newcommand{\Q}{\mathbb{Q}}
\newcommand{\R}{\mathbb{R}}
\newcommand{\Z}{\mathbb{Z}}

\newcommand{\SO}{\mathrm{\SO}}
\newcommand{\SL}{\mathrm{SL}}

\newcommand{\hooklongrightarrow}{\lhook\joinrel\longrightarrow}

\title[The simplicial volume of mapping tori of 3-manifolds]{The simplicial volume of mapping tori of 3-manifolds}

\author{Michelle Bucher and Christoforos Neofytidis}
\address{Section de Math\'ematiques, Universit\'e de Gen\`eve, 2-4 rue du Li\`evre, Case postale 64, 1211 Gen\`eve 4, Switzerland}
\email{Michelle.Bucher-Karlsson@unige.ch}
\address{Section de Math\'ematiques, Universit\'e de Gen\`eve, 2-4 rue du Li\`evre, Case postale 64, 1211 Gen\`eve 4, Switzerland}
\email{Christoforos.Neofytidis@unige.ch}
\date{\today}
\subjclass[2010]{57M05, 55R10, 57R22, 53C23}
\keywords{Simplicial volume, mapping torus, fiber bundle, mapping class group of 3-manifold}

\begin{document}

\maketitle

\begin{abstract}
We prove that any mapping torus of a closed 3-manifold has zero simplicial volume. When the fiber is a prime 3-manifold, classification results can be applied to show vanishing of the simplicial volume, however the case of reducible fibers is by far more subtle. We thus analyse the possible self-homeomorphisms of reducible 3-manifolds, and use this analysis to produce an explicit representative of the fundamental class of the corresponding mapping tori. To this end, we introduce a new technique for understanding self-homeomorphisms of connected sums in arbitrary dimensions on the level of classifying spaces and for computing the simplicial volume. In particular, we extend our computations to mapping tori of certain connected sums in higher dimensions.
Our main result completes the picture for the vanishing of the simplicial volume of fiber bundles in dimension four. Moreover, we deduce that dimension four together with the trivial case of dimension two are the only dimensions where all mapping tori have vanishing simplicial volume. As a group theoretic consequence, we derive an alternative proof of the fact that the fundamental group $G$ of a mapping torus of a 3-manifold $M$ is Gromov hyperbolic if and only if $M$ is virtually a connected sum $\# S^2\times S^1$ and $G$ does not contain $\Z^2$. 
\end{abstract}

\section{Introduction}

For a topological space $X$ and a homology class $\alpha\in H_n(X;\R)$, Gromov~\cite{Gromov} introduced 
the $\ell^1$-semi-norm of $\alpha$ to be
\[
\|\alpha\|_1:=\inf \biggl\{\sum_{j} |\lambda_j| \ \biggl\vert  \ \sum_j \lambda_j\sigma_j\in C_n(X;\R) \ \text{is a singular cycle representing } \alpha  \biggl\}.
\]
If $M$ is a closed oriented $n$-dimensional manifold, then the simplicial volume of $M$ is given by $\|M\|:=\|[M]\|_1$, where $[M]$ denotes the fundamental class of $M$. 

In general, it is a hard problem to compute or estimate the simplicial volume (or more generally the $\ell^1$-semi-norm), and results have been obtained only for certain classes of spaces. Most notably, there are precise computations for hyperbolic manifolds \cite{Gromov, Th1}  and $\mathbb{H}^2\times\mathbb{H}^2$-manifolds~\cite{Buch1}, non-vanishing results for negatively curved manifolds~\cite{Gromov,IY} and higher rank locally symmetric spaces of non-compact type~\cite{Buch,LS,Savage}, and estimates for direct products~\cite{Gromov} and surface bundles~\cite{Buch2,HK}. The case of general fiber bundles seems to be more subtle. For instance, if the fiber of a bundle has amenable fundamental group, then the simplicial volume vanishes~\cite{Gromov}, however, if the bundle fibers over a base with amenable fundamental group, then the simplicial volume need not be zero. Indeed, any surface bundle over the circle that admits a hyperbolic structure has positive simplicial volume.  In contrast, surface bundles over manifolds of dimension greater than one and amenable fundamental group have zero simplicial volume~\cite{Bow}. It is therefore natural to ask what happens to the simplicial volume of higher dimensional bundles over spaces with amenable fundamental groups:

\begin{que}
Let $M$ be a closed oriented manifold of dimension greater than two. When does the simplicial volume of an $M$-bundle over a closed oriented manifold with amenable fundamental group vanish?
\end{que}

Our main result gives a complete answer in dimension four:

\begin{thm}\label{t:3-mfdoverS^1}
The simplicial volume of every mapping torus of a closed 3-manifold is zero.
\end{thm}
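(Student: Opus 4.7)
The natural approach is to reduce the problem to the prime decomposition of the fiber $M$, handle each prime piece using $3$-manifold geometry, and then recombine the pieces via Gromov's additivity of simplicial volume across codimension-one submanifolds with amenable fundamental group. Since $\|\cdot\|$ is multiplicative under finite coverings, I can first pass to the orientation double cover and assume $M$ is orientable; by the same multiplicativity applied to a finite cyclic cover of the base circle, together with the classical fact that any self-homeomorphism of an orientable $3$-manifold permutes its prime summands up to isotopy, I may replace $f$ by a suitable power and assume that $f$ preserves the Kneser--Milnor decomposition $M \cong M_1 \# \cdots \# M_k \# r(S^2 \times S^1)$ summand by summand.

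\textbf{Prime fibers.} When $M$ is prime, I apply geometrization. If $\pi_1(M)$ is amenable, then $\pi_1(M_f)$ is amenable-by-$\Z$ and hence amenable, so Gromov's vanishing theorem gives $\|M_f\| = 0$. If $M$ is hyperbolic, Mostow rigidity makes $\mathrm{Out}(\pi_1(M))$ finite, so some power $f^n$ is isotopic to the identity and the finite cover $M_{f^n}$ is homotopy equivalent to $M \times S^1$, which has vanishing simplicial volume via self-maps of arbitrary degree on the circle factor. If $M$ is Seifert with non-amenable $\pi_1$, then after isotopy $f$ preserves the Seifert fibration (Waldhausen), so $M_f$ inherits a nontrivial $S^1$-action and its simplicial volume vanishes. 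If $M$ is a non-geometric irreducible prime, its mapping class group is finite (a classical consequence of the JSJ theorem of Johannson and Jaco--Shalen), reducing again to the finite-cover case. Finally, $S^2 \times S^1$ has finite mapping class group, and the same reduction applies.

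\textbf{Reducible fibers.} For reducible $M$, the plan is to exhibit $M_f$ as an amalgamation of the mapping tori of its prime summands along $S^2$-bundles over the circle. Concretely, after isotoping $f$ to preserve a maximal system of reducing $2$-spheres, I remove an equivariant $3$-ball neighborhood from each summand; taking the mapping torus of each resulting piece presents $M_f$ as the union of the manifolds $(M_i \setminus \mathring{B}^3)_{f|_{M_i}}$ glued along $S^2$-bundles over $S^1$, each of which has fundamental group $\Z$ and is thus amenable. Gromov's additivity across $\pi_1$-amenable codimension-one submanifolds then yields $\|M_f\| \leq \sum_i \|(M_i)_{f|_{M_i}}\|$, and the prime-fiber case completes the proof.

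\textbf{Main obstacle.} The delicate point is the very first step of the gluing: actually arranging that $f$ (or a suitable power) preserves a system of reducing $2$-spheres and restricts to honest self-homeomorphisms of the punctured summands. The mapping class group of a reducible $3$-manifold contains not only summand permutations but also sphere twists and slide homeomorphisms (Laudenbach); the former can be absorbed by passing to a power, but the latter act nontrivially on prime pieces and must be controlled globally so that the ball neighborhoods above can be chosen coherently along the mapping-torus direction. This is precisely the difficulty flagged in the abstract, and I expect to need the new classifying-space-level description of self-homeomorphisms of connected sums that the authors announce, in order to write down an explicit representative of $[M_f]$ of arbitrarily small $\ell^1$-norm.
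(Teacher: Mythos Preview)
Your overall strategy matches the paper's: reduce to prime fibers, handle each geometry separately, and then treat the reducible case. For most prime cases (amenable geometries, hyperbolic fiber, Seifert fibers of type $\widetilde{SL_2}$ or $\mathbb{H}^2\times\mathbb{R}$, and $S^2\times S^1$) your arguments align with the paper's.

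There is, however, a genuine error in the non-geometric irreducible case. The mapping class group of a closed irreducible $3$-manifold with non-trivial JSJ decomposition is \emph{not} finite in general: Dehn twists along the JSJ tori typically give infinite-order mapping classes, and the Seifert pieces themselves can have infinite mapping class groups. Johannson's finiteness result applies to \emph{simple} (atoroidal, acylindrical) Haken pieces, which is precisely the opposite regime. The paper handles this case differently: after passing to a power so that $f$ preserves each JSJ torus and each JSJ piece, one uses Mostow rigidity (and a further iterate) to make $f$ the identity on each hyperbolic piece, uses the circle-bundle argument on each Seifert piece, and then applies additivity of relative simplicial volume along the resulting $T^3$ boundaries to conclude.

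For the reducible case, you correctly identify the obstruction (slide homeomorphisms prevent one from isotoping $f$ to restrict to honest self-homeomorphisms of the punctured summands) but do not overcome it; you end by deferring to the paper's technique. That technique is rather different from the $S^2$-gluing/amenable-additivity picture you sketch: the paper passes to the classifying space $M^\vee=M_1\vee\cdots\vee M_n\vee(\vee_{j=1}^m S^1)$, replaces $f$ by an explicit map $f^\vee$ on $M^\vee$, and then wedges on one extra circle per slide to ``unmix'' the slides, so that a power of the resulting self-map sends each $M_i$ into $M_i\vee(\vee_k S^1)$. This reduces $\|M\rtimes_f S^1\|$ to the vanishing of $\|M_i\rtimes_{F_i}S^1\|$ for genuine self-homeomorphisms $F_i$ of the summands, via Gromov's Mapping Theorem and the long exact sequence of the mapping torus rather than via amenable gluings. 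So even modulo the JSJ issue, what you have written is an outline with the main step still missing.
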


This result has various topological and group theoretic consequences. First of all, Theorem \ref{t:3-mfdoverS^1}, together with known calculations and estimates completes the picture for the vanishing of the simplicial volume of fiber bundles in dimension four:

\begin{cor}\label{c:fiber4}
Let $F\to M\to B$ be a closed 4-manifold which is a fiber bundle with fiber $F$ and base $B$. If $\dim(F)=\dim(B)=2$ and the surfaces $F$ and $B$ support a hyperbolic structure, then $\|M\|>0$; otherwise $\|M\|=0$.
\end{cor}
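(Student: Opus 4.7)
The plan is to observe that the dimension constraint $\dim F + \dim B = 4$ forces the pair $(\dim F, \dim B)$ to be one of $(1,3)$, $(2,2)$, or $(3,1)$ (the endpoints $\dim F \in \{0,4\}$ are trivial), and to dispatch each case by combining Theorem~\ref{t:3-mfdoverS^1} with the vanishing and positivity results already cited in the introduction.

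First I would dispose of the two outer cases. When $(\dim F,\dim B) = (1,3)$, the fiber $F = S^1$ has amenable fundamental group, so Gromov's vanishing theorem for bundles with amenable fiber gives $\|M\| = 0$. When $(\dim F,\dim B) = (3,1)$, closedness of $B$ forces $B = S^1$, so $M$ is a mapping torus of the closed $3$-manifold $F$, and Theorem~\ref{t:3-mfdoverS^1} applies.

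The substantive case is surface bundles over surfaces, $(\dim F,\dim B) = (2,2)$, which I would split according to whether $F$ and $B$ admit hyperbolic structures. If at least one of them does not, then that surface is $S^2$, $\R P^2$, $T^2$, or the Klein bottle, and in particular has virtually abelian (hence amenable) fundamental group. Vanishing then follows either from Gromov's amenable-fiber theorem (if $\pi_1(F)$ is amenable) or from Bowden's theorem~\cite{Bow} on surface bundles over bases of dimension at least two with amenable fundamental group (if only $\pi_1(B)$ is amenable, while $\pi_1(F)$ is not). If instead both $F$ and $B$ are hyperbolic, strict positivity is precisely the content of the Hoster--Kotschick estimate~\cite{HK} for surface bundles with hyperbolic fiber over a hyperbolic base, with the product case handled more precisely by Bucher's exact computation~\cite{Buch1}.

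The only content of the argument is the case analysis itself, together with checking that each invoked theorem genuinely covers the relevant subcase. The main obstacle in the overall picture had been the mapping-torus case $(3,1)$, which Theorem~\ref{t:3-mfdoverS^1} has just settled, so the corollary follows without further work.
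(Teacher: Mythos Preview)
Your proof is correct and follows essentially the same case analysis as the paper's own proof: split by $(\dim F,\dim B)$, invoke Gromov for amenable fiber, Theorem~\ref{t:3-mfdoverS^1} for the mapping-torus case, Bowden for amenable base, and \cite{HK}/\cite{Buch2} for the hyperbolic-over-hyperbolic case. The only cosmetic differences are that the paper handles the $S^2$ subcase via rational inessentiality rather than amenability, and cites \cite{Buch2} instead of \cite{HK} for the positive lower bound; both routes work.
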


In higher dimensions, it is easy to construct examples of manifolds that fiber over the circle and have positive simplicial volume, by taking products of hyperbolic 3-manifolds that fiber over the circle with hyperbolic manifolds of any dimension. Since in addition the only mapping torus in dimension two is the 2-torus, which has zero simplicial volume, we obtain the following:

\begin{cor}\label{c:mappingtorizero}
The simplicial volume of every mapping torus vanishes only in dimensions two and four.
\end{cor}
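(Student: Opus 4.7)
My plan is to split the corollary into the ``vanishing'' direction and the ``non-vanishing'' direction. For the vanishing half I would reduce directly to known results. In dimension two the only orientable closed 2-manifold fibering over the circle is the 2-torus $T^2$ (the only orientation-preserving self-homeomorphism of $S^1$ up to isotopy being the identity), and $\|T^2\|=0$ follows from amenability of $\Z^2$. In dimension four the vanishing is precisely Theorem~\ref{t:3-mfdoverS^1}.

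For the non-vanishing half I would exhibit, in every remaining dimension $n\geq 3$ with $n\neq 4$, an explicit mapping torus of positive simplicial volume. The 3-dimensional case serves as a base: by Thurston's hyperbolization, the mapping torus of a pseudo-Anosov homeomorphism of a closed hyperbolic surface is a closed hyperbolic 3-manifold, and the Gromov--Thurston proportionality principle then yields positive simplicial volume.

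In dimension $n\geq 5$ the idea is to bootstrap from the 3-dimensional example by taking products. Let $N=M_\phi$ be a hyperbolic 3-manifold fibering over $S^1$ with surface fiber $\Sigma$ and monodromy $\phi\colon\Sigma\to\Sigma$, and let $Y$ be a closed hyperbolic manifold of dimension $n-3$ (such $Y$ exist in every dimension by arithmetic constructions). Then $N\times Y$ is canonically the mapping torus of $\phi\times\id_Y\colon\Sigma\times Y\to\Sigma\times Y$, and Gromov's lower bound for products of manifolds with positive simplicial volume,
\[
\|N\times Y\|\geq c_{3,n-3}\,\|N\|\cdot\|Y\|,
\]
furnishes an $n$-dimensional mapping torus with nonzero simplicial volume.

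I do not foresee a genuine obstacle here: the corollary simply packages Theorem~\ref{t:3-mfdoverS^1} together with two standard ingredients, namely that the mapping torus construction commutes with product with a fixed manifold and that products of hyperbolic manifolds have positive simplicial volume. The only detail worth checking carefully is the identification $M_\phi\times Y\cong M_{\phi\times\id_Y}$, which however is immediate from the definition of the mapping torus.
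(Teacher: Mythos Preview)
Your proposal is correct and follows essentially the same route as the paper: dimensions two and four are handled by the torus and Theorem~\ref{t:3-mfdoverS^1} respectively, dimension three by a hyperbolic surface bundle over $S^1$, and dimensions $\geq 5$ by taking the product of such a hyperbolic $3$-manifold with a closed hyperbolic manifold of the complementary dimension and invoking Gromov's product inequality. The only cosmetic differences are that you spell out the pseudo-Anosov/arithmetic justifications and include a constant in the product bound, whereas the paper simply cites the inequality $\|M\times N\|\geq \|M\|\,\|N\|$.
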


An interesting group theoretic problem is to determine whether a given group is Gromov hyperbolic. A well-known obstruction for a group to be hyperbolic is to contain a subgroup isomorphic to $\Z^2$. This property characterizes hyperbolicity for free-by-cyclic groups: If $F_m$ is a free group, then a semi-direct product $G=F_m\rtimes\Z$ is hyperbolic if and only if $G$ does not contain $\Z^2$ \cite{BF1,Br}. This in particular determines when the fundamental group of a mapping torus of a connected sum $\#_m S^2\times S^1$ is hyperbolic. Combining Theorem \ref{t:3-mfdoverS^1} with Mineyev's work on bounded cohomology of hyperbolic groups~\cite{Min1,Min2}, we obtain a topological proof of the following known characterization of 3-manifold-by-cyclic groups:

\begin{cor}\label{c:hyperbolic}
The fundamental group $G$ of a mapping torus of a 3-manifold $M$ is hyperbolic if and only if $M$ is virtually $\#_m S^2\times S^1$ and $G$ does not contain $\Z^2$.
\end{cor}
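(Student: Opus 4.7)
The plan is to treat the two implications separately. For the easier $(\Leftarrow)$ direction, suppose $M$ is virtually $\#_m S^2\times S^1$ and $G$ does not contain $\Z^2$. I would first note that $\pi_1(M)$ is virtually free, then intersect a free normal finite-index subgroup with its finite orbit under the monodromy automorphism $\phi\in\mathrm{Aut}(\pi_1(M))$ to obtain a $\phi$-invariant free normal finite-index subgroup $N'\trianglelefteq\pi_1(M)$. The subgroup $N'\rtimes\Z\leq G$ is then a finite-index free-by-cyclic subgroup containing no $\Z^2$, hence hyperbolic by the Brinkmann--Bestvina--Feighn theorem~\cite{BF1,Br}, and commensurability invariance yields that $G$ itself is hyperbolic.

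For the forward $(\Rightarrow)$ direction, assume $G$ is hyperbolic; then $G$ contains no $\Z^2$ by a standard property. My goal is to show every prime summand of $M$ is $S^3/\Gamma$ (with $\Gamma$ finite) or $S^2\times S^1$; once this is achieved, $\pi_1(M)$ is virtually free and a classical 3-manifold result gives that $M$ is virtually $\#_m S^2\times S^1$. As a warm-up, I would combine Theorem~\ref{t:3-mfdoverS^1} with Mineyev's surjectivity of the comparison map $H^n_b(G;\R)\to H^n(G;\R)$ for hyperbolic $G$ to forbid $M$ from being globally aspherical: otherwise the mapping torus $T_f(M)\simeq K(G,1)$ would carry a bounded representative of its fundamental cocycle, forcing $\|T_f(M)\|>0$ and contradicting Theorem~\ref{t:3-mfdoverS^1}. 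The remaining task is to preclude an individual aspherical prime summand $M_1$ in the prime decomposition $M=M_1\#\cdots\#M_k$.

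I would then split into two cases. If $M_1$ is non-hyperbolic, then by geometrization $M_1$ contains an essential torus (from its JSJ decomposition, its Seifert fibration, or the Sol geometry), so $\Z^2\leq\pi_1(M_1)\leq G$, contradicting hyperbolicity. If $M_1$ is hyperbolic, I would invoke Mostow rigidity, which gives that $\mathrm{Out}(\pi_1(M_1))$ is finite. Since $\phi$ permutes the (freely indecomposable) free factors of $\pi_1(M)=\pi_1(M_1)*\cdots*\pi_1(M_k)$ up to conjugacy, a suitable power $\phi^N$ (corresponding to a still-hyperbolic finite-index subgroup of $G$) preserves the conjugacy class of $\pi_1(M_1)$, so there is $g\in\pi_1(M)$ such that $\tau:=g^{-1}t^N\in G$ normalizes $\pi_1(M_1)$, where $t$ generates the $\Z$-quotient of $G$. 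The induced outer automorphism of $\pi_1(M_1)$ has finite order, so a further power $\tau^m$ becomes inner: $\tau^m\gamma\tau^{-m}=h\gamma h^{-1}$ for some $h\in\pi_1(M_1)$ and all $\gamma\in\pi_1(M_1)$. Then $h^{-1}\tau^m\in G\setminus\pi_1(M)$ centralizes $\pi_1(M_1)$ and has infinite order, so together with any nontrivial element of the torsion-free group $\pi_1(M_1)$ it generates $\Z^2\leq G$, the desired contradiction. The main obstacle is precisely this hyperbolic summand case, where Mostow rigidity must be combined with the automorphism theory of free products to extract a commuting pair in the extension $G$.
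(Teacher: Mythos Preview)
Your argument is correct but diverges from the paper's for the forward implication. The paper treats every case in which $M$ has an aspherical prime summand \emph{uniformly}: the existence of such a summand makes $M$, and hence the mapping torus, rationally essential; Mineyev's surjectivity of the comparison map $H^4_b\to H^4$ together with $\ell^1$/$\ell^\infty$ duality then forces $\|M\rtimes_f S^1\|>0$, contradicting Theorem~\ref{t:3-mfdoverS^1}. By contrast, you invoke Mineyev and Theorem~\ref{t:3-mfdoverS^1} only in your ``warm-up'' for globally aspherical $M$ (a case that is in fact already covered by your subsequent summand analysis, since an aspherical closed $3$-manifold is irreducible), and you eliminate an individual aspherical summand $M_1$ by a direct group-theoretic argument: $\Z^2\le\pi_1(M_1)$ if $M_1$ is non-hyperbolic, and Mostow rigidity plus the free-product structure producing a $\Z^2$ in $G$ if $M_1$ is hyperbolic. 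Your hyperbolic-summand step is essentially the purely group-theoretic route alluded to in Remark~\ref{r:K-S}; it is self-contained and bypasses simplicial volume entirely. The paper's proof is shorter and showcases Theorem~\ref{t:3-mfdoverS^1} as the decisive input---which is the intended point of deriving this corollary here---whereas your approach, once the redundant warm-up is dropped, actually does not use Theorem~\ref{t:3-mfdoverS^1} at all.
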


\begin{rem}\label{r:K-S}
As mentioned above, the characterization of the hyperbolicity of free-by-cyclic groups is part of~\cite{BF1,Br}. Thus, in view of geometrization of 3-manifolds, the only challenging case is when $M$ is (virtually) a connected sum of hyperbolic 3-manifolds and copies of $S^2\times S^1$ (with at least one hyperbolic 3-manifold). In this case, $\pi_1(M)$ is hyperbolic. M. Kapovich and Z. Sela  made us aware of the fact that if $H$ is a finitely presented hyperbolic group and $H\rtimes_\psi\Z$ is hyperbolic, then $H$ is virtually a free product of free and surface groups. In particular the virtual cohomological dimension of $H$ is $\leq 2$ which excludes $H=\pi_1(M)$. Thus $\pi_1(M)\rtimes_{f_*}\Z$ cannot be hyperbolic for any self-homeomorphism $f\colon M\longrightarrow M$. Furthermore, M. Kapovich informed us that the assumption on the hyperbolicity for $H$ can be removed. Nevertheless, Corollary \ref{c:hyperbolic} gives another uniform argument for all 3-manifolds (possibly prime) with an aspherical summand in their prime decomposition.
\end{rem}

In order to prove Theorem \ref{t:3-mfdoverS^1}, we need to examine self-homeomorphisms of the 3-manifold fiber $M$. When $M$ is reducible, then the monodromy of the mapping torus of $M$ is in general more complicated than when $M$ is prime~\cite{Mc,Zhao,NW}. However, as we shall see, the induced automorphism on the fundamental group of a reducible 3-manifold $M$ has a specific form. More precisely, by the description of any self-homeomorphism $f$ of 
$$M=M_1\#\cdots\# M_n\#(\#_m S^2\times S^1)$$
given in~\cite{Mc,Zhao} (the $M_i$ can be taken to be aspherical up to finite covers), 
the induced automorphism $f_*$ on $\pi_1(M)=\pi_1(M_1)*\dots *\pi_1(M_n)*F_m$, where $F_m$ denotes the free group of rank $m$, is a finite composition of self-automorphisms of factors $\pi_1(M_i)$ and $\pi_1(S^2\times S^1)$, of interchanges of isomorphic factors, and of automorphisms of type
\begin{equation}\begin{array}{rcll}\label{equ: slide M}\tag{4a}
\pi_1(M_1)*\dots *\pi_1(M_n)*F_m&\longrightarrow & \pi_1(M_1)*\dots *\pi_1(M_n)*F_m&\\
 \gamma_i &\longmapsto & \delta \gamma_i \delta^{-1}, \hspace{8pt} \mathrm{for \ } \gamma_i\in \pi_1(M_i), &\\
  \gamma_j&\longmapsto &\gamma_j, \hspace{30pt} \mathrm{for \ } \gamma_j\in \pi_1(M_j), j\neq i, \mathrm{\ or \ }\gamma_j\in F_m&
\end{array}\end{equation}
and
\begin{equation}\begin{array}{rcll}\label{equ: slide S}\tag{4b}
\pi_1(M_1)*\dots *\pi_1(M_n)*F_m&\longrightarrow & \pi_1(M_1)*\dots *\pi_1(M_n)*F_m&\\
 \gamma_j &\longmapsto & \delta \gamma_j, \hspace{2pt} \mathrm{for \ } \gamma_j\in \pi_1(j\mathrm{-th \ } S^1 \times S^2), &\\
  \gamma_i&\longmapsto &\gamma_i, \hspace{9pt} \mathrm{for \ } \gamma_i\in \pi_1(M_i), \forall i, \mathrm{\ or \ }\gamma_i \in \pi_1(i\mathrm{-th \ } S^1 \times S^2), i\neq j&
\end{array}\end{equation}
or
\begin{equation*}\begin{array}{rcll}
\pi_1(M_1)*\dots *\pi_1(M_n)*F_m&\longrightarrow & \pi_1(M_1)*\dots *\pi_1(M_n)*F_m&\\
 \gamma_j &\longmapsto & \gamma_j \delta, \hspace{2pt} \mathrm{for \ } \gamma_j\in \pi_1(j\mathrm{-th \ } S^1 \times S^2), &\\
  \gamma_i&\longmapsto &\gamma_i, \hspace{9pt} \mathrm{for \ } \gamma_i\in \pi_1(M_i), \forall i, \mathrm{\ or \ }\gamma_i \in \pi_1(i\mathrm{-th \ } S^1 \times S^2), i\neq j,&
\end{array}\end{equation*}
where $\delta\in\pi_1(M)$. Automorphisms of type (\ref{equ: slide M}) and (\ref{equ: slide S}) correspond to slide homeomorphisms of $M_i$ and of each end of $S^2\times I$ respectively; see Section \ref{s:reducible}.

Using the above description of $f_*$, we can describe any homeomorphism $f\colon M\longrightarrow M$ explicitly on the classifying space $B\pi_1(M)$ and apply techniques from bounded cohomology to  reduce our computation to the simplicial volume of a mapping torus of each individual prime summand. Then the vanishing of the simplicial volume of $M$ follows from the vanishing of the simplicial volume of mapping tori of prime 3-manifolds. In fact, assuming the above description for $f_*$ and the vanishing of the simplicial volume of any mapping torus of each prime summand, we can prove the following more general statement in arbitrary dimensions:

\begin{thm}\label{t:general}
Let $M=M_1\#\cdots\#M_n\#(\#_m S^{q-1}\times S^1)$ be a closed oriented $q$-dimensional manifold, $q\geq3$, where $M_i$ are aspherical, and $f\colon M\longrightarrow M$ be a homeomorphism. Suppose that the induced automorphism $f_*\colon\pi_1(M)\longrightarrow\pi_1(M)$ is given by $(g_4g_3g_2g_1)_*$ where each $(g_\ell)_*$, $\ell=1,2,3,4$, is a finite composition of automorphisms of type ($\ell$) below:
\begin{itemize}
\item[(1)] self-automorphisms of factors $\pi_1(M_i)$;
\item[(2)] interchanges of two isomorphic factors;
\item[(3)] spins: self-automorphisms of $\pi_1(S^{q-1}\times S^1)\cong\mathbb{Z}$;
\item[(4)] slides of $M_i$ and each end of $S^{q-1}\times S^1$: automorphisms as given by (\ref{equ: slide M}) and (\ref{equ: slide S}) respectively.
\end{itemize}
If any mapping torus of each $M_i$ has zero simplicial volume, then $\|M\rtimes_f S^1\|=0$.
\end{thm}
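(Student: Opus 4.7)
The approach is to pass to a computation on the classifying space of $\pi_1(M) \rtimes_{f_*} \Z$ and exploit the explicit decomposition of $f_*$. Since each $M_i$ is aspherical and $\pi_1(M) = \pi_1(M_1) * \cdots * \pi_1(M_n) * F_m$, the wedge $Y := M_1 \vee \cdots \vee M_n \vee \bigvee_m S^1$ is a $K(\pi_1(M), 1)$. I would pick a cellular self-map $F : Y \to Y$ realizing $f_*$; the mapping torus $T_F Y$ is then a $K(\pi_1(M) \rtimes_{f_*} \Z, 1)$, and the classifying map $\tilde c : M \rtimes_f S^1 \to T_F Y$ is an isomorphism on $\pi_1$. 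Gromov's mapping theorem therefore gives
\[
\|M \rtimes_f S^1\| \;=\; \|\tilde c_*[M \rtimes_f S^1]\|_1,
\]
so it is enough to construct a singular cycle in $T_F Y$ representing $\tilde c_*[M \rtimes_f S^1]$ of arbitrarily small $\ell^1$-norm.

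Using the factorization $f_* = (g_4 g_3 g_2 g_1)_*$, I would take $F \simeq F_4 \circ F_3 \circ F_2 \circ F_1$ where each $F_\ell$ is a composition of elementary moves of type ($\ell$). Then $T_F Y$ is homotopy equivalent to the cyclic gluing of four cylinders $Y \times I$ with twists $F_1, \ldots, F_4$ at the seams, and the fundamental class in $T_F Y$ decomposes into four block contributions (one per $F_\ell$). Seam chains between consecutive blocks live on copies of the fiber $Y$ and carry an $S^1$-direction; since $\|[S^1]\|_1 = 0$, they contribute nothing to the $\ell^1$-norm by Gromov's product inequality. The problem thus reduces to producing, on each block $T_{F_\ell} Y$, a relative fundamental cycle of vanishing $\ell^1$-norm.

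I would then address each elementary type in turn. For \emph{type (1)}, $F_1$ is a wedge of self-homotopy-equivalences $\phi_i : M_i \to M_i$ and the identity on the $S^1$-summands; $T_{F_1} Y$ is then the wedge, along a common basepoint circle, of the mapping tori $T_{\phi_i} M_i$ and copies of $S^1 \times S^1$. By asphericity of $M_i$, the simplicial volume $\|T_{\phi_i} M_i\|$ depends only on the outer class of $\phi_{i,*}$, so the hypothesis (together with the realization of the relevant $\pi_1$-automorphism by a self-homeomorphism) yields vanishing on each summand; the $S^1 \times S^1$ factors have zero simplicial volume by the product formula. For \emph{type (2)}, an interchange is an involution, so $T_{F_2} Y$ admits a 2-fold cover of the form $Y' \times S^1$, which has vanishing simplicial volume by the product formula; multiplicativity under finite covers concludes the case. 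For \emph{type (3)}, a spin acts as $\pm\id$ on a single $S^1$-summand and trivially elsewhere, giving $T^2$ or the Klein bottle, both with zero simplicial volume. For \emph{type (4)}, slides are the most delicate: I would realize $F_4$ by a drag of a small neighborhood of the wedge point along the loop $\delta$, so that $T_{F_4} Y$ is obtained from $Y \times S^1$ by a modification supported along $\delta \times S^1$; the complement fibers trivially over $S^1$ and contributes zero via the product formula, and the local modification can be treated by an explicit chain construction exploiting the amenability of $\pi_1(S^1) = \Z$.

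The main obstacle is the slide case (type 4) together with the bookkeeping required to assemble the four block cycles into a single global representative of $\tilde c_*[M \rtimes_f S^1]$ with $\ell^1$-norm tending to zero. Slides are not inner automorphisms of $\pi_1(M)$, so $T_{F_4} Y$ is genuinely not a product, and a direct appeal to an inner-automorphism argument is unavailable; instead, one must build the relative cycle by hand, compatibly with the seam identifications on adjacent blocks, and verify that the "slide" contribution is genuinely of vanishing $\ell^1$-norm. This explicit chain-level analysis on the classifying space is the new technique announced in the abstract; the remaining types (1)--(3) rely only on classical product and multiplicativity properties of simplicial volume.
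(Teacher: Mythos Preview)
Your opening move---passing to the wedge $Y=M_1\vee\cdots\vee M_n\vee\bigvee_m S^1$ and invoking Gromov's mapping theorem---is exactly right and matches the paper. The trouble begins immediately after, with the proposed decomposition of $T_FY$ into four ``blocks'' along the $S^1$-direction.

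There is no subadditivity of simplicial volume under such a decomposition. Concretely, your blocks are mapping \emph{cylinders} of $F_1,\dots,F_4$, glued cyclically along copies of $Y$. You then switch to speaking of the mapping \emph{tori} $T_{F_\ell}Y$ and invoke vanishing of their (absolute) simplicial volumes; but a bound on $\|T_{F_\ell}Y\|$ says nothing about the \emph{relative} $\ell^1$-norm of a fundamental cycle on the mapping cylinder, which is what you would need. And even if you had small relative cycles on each block, they must agree on the seams to glue; the seams are copies of the $q$-dimensional complex $Y$, not products with a circle, so your claim that ``seam chains carry an $S^1$-direction and contribute nothing by the product inequality'' is simply false. (A sanity check: if your block argument worked, it would give $\|T_{g\circ g^{-1}}\Sigma\|\leq \|T_g\Sigma\|+\|T_{g^{-1}}\Sigma\|$ for a pseudo-Anosov $g$ on a surface $\Sigma$, i.e.\ $0\leq$ positive $+$ positive, which is no contradiction---but it also gives $\|T_{g}\Sigma\|\leq \|T_{g}\Sigma\|+\|T_{\mathrm{id}}\Sigma\|$, from which no information flows. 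The point is that the block decomposition carries no control.)

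Your handling of type~(4) is where the real idea is needed, and ``an explicit chain construction exploiting amenability of $\pi_1(S^1)$'' is not one. The paper's trick is orthogonal to your plan: rather than cutting the $S^1$-direction, it \emph{enlarges the fiber} to $M^S:=Y\vee\bigvee_{k=1}^r S^1$ (one extra circle per slide) and replaces each slide $g_{4,k}^\vee$, which drags a summand along a loop $\alpha_k$ that may wander all over $Y$, by a map $g_{4,k}^S$ that drags it along the $k$-th \emph{new} circle instead. A degree-one map $A\colon M^S\to Y$ (sending the $k$-th new circle to $\alpha_k$) intertwines $f^S$ and $f^\vee$, so $\|T_{f^\vee}Y\|\le\|T_{f^S}M^S\|$. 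The gain is that $f^S$ no longer mixes the $M_i$'s through each other: after passing to a power killing the permutation part, $(f^S)^{\mathcal N}$ sends each $M_i$ into $M_i\vee\bigvee_k S^1$, and the fundamental class of $M^S\rtimes_{(f^S)^{\mathcal N}}S^1$ is the sum of the images of the classes $[(M_i\vee\bigvee_k S^1)\rtimes_{f_i}S^1]$. Each of the latter is then dominated by a genuine mapping torus $M_i\rtimes_{F_i}S^1$ of a self-homeomorphism of $M_i$, where the hypothesis applies. In short: decompose along the fiber, not along the base, and ``un-mix'' the slides by adding auxiliary circles. That is the step your proposal is missing.
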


\begin{rem}
The terminology ``spins" and ``slides" comes from $3$-dimensional topology; see~\cite{Mc} or Section \ref{s:reducible}. Although from our point of view, we would not need to distinguish between type (1) and type (3), we prefer to stay close to the description from~\cite{Mc}.
\end{rem}

The above statement applies in particular to hyperbolic manifolds of dimension greater than two:

\begin{cor}\label{c:connectedsumshyperbolic}
Let $M=M_1\#\cdots\#M_n$, where each $M_i$ is a closed oriented hyperbolic manifold of dimension greater than two. Then $\|M\rtimes_f S^1\|=0$ for any homeomorphism $f\colon M\longrightarrow M$.
\end{cor}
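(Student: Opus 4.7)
The plan is to apply Theorem \ref{t:general} with $m=0$ (so there are no $S^{q-1}\times S^1$ summands, and in particular type (3) automorphisms do not arise). Two hypotheses must be verified: that every mapping torus of each hyperbolic factor $M_i$ has vanishing simplicial volume, and that every induced automorphism $f_*$ on $\pi_1(M)$ factors as $(g_4g_3g_2g_1)_*$ in the required way.

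For the first hypothesis, Mostow rigidity implies that $\mathrm{Out}(\pi_1(M_i))$ is finite, since it is isomorphic to the isometry group of the closed hyperbolic manifold $M_i$. Hence for every self-homeomorphism $\phi\colon M_i\to M_i$, some power $\phi^k$ induces an inner automorphism on $\pi_1(M_i)$, and the corresponding $k$-fold cyclic cover of the mapping torus $M_i\rtimes_\phi S^1$ is aspherical with fundamental group $\pi_1(M_i)\times\mathbb{Z}$, hence homotopy equivalent to $M_i\times S^1$. Gromov's product inequality together with $\|S^1\|=0$ gives $\|M_i\times S^1\|=0$, and by multiplicativity of simplicial volume under finite coverings one concludes $\|M_i\rtimes_\phi S^1\|=0$.

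For the second hypothesis, each $\pi_1(M_i)$ is one-ended, being the fundamental group of a closed aspherical manifold of dimension at least two, and so freely indecomposable by Stallings' ends theorem; it is also not infinite cyclic. The classical theorem of Fouxe-Rabinovitch then asserts that every automorphism of the free product $\pi_1(M_1)*\cdots*\pi_1(M_n)$ is a finite composition of factor automorphisms, interchanges of isomorphic factors, and partial conjugations --- exactly types (1), (2), and (4a) of Theorem \ref{t:general}. A routine commutation argument (a slide conjugated by a factor automorphism or by an interchange is again a slide, with the conjugating element transformed by the relevant map) rearranges any such composition into the canonical form $(g_4g_3g_2g_1)_*$ with $g_3=\mathrm{id}$.

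The main technical point is this commutation bookkeeping, in which the conjugating elements of the slides must be carefully tracked as they are pushed past factor automorphisms and interchanges; the remaining ingredients (Mostow rigidity, Stallings' ends theorem, Fouxe-Rabinovitch, Gromov's product inequality, and multiplicativity under finite covers) are standard. Once both hypotheses are in place, Theorem \ref{t:general} directly delivers $\|M\rtimes_f S^1\|=0$.
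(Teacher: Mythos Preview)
Your proof is correct and follows the same overall strategy as the paper: verify the two hypotheses of Theorem~\ref{t:general} and apply it. The first hypothesis (vanishing for mapping tori of each $M_i$) is handled identically via Mostow rigidity.

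For the second hypothesis the paper takes a slightly shorter route. Rather than invoking Fouxe--Rabinovitch and then carrying out the commutation bookkeeping to reorder generators into the form $(g_4g_3g_2g_1)_*$, the paper observes (via Bass--Serre/Kurosh, using that each $\pi_1(M_i)$ is one-ended) that $f_*$ sends each factor to a conjugate of some factor, then passes to a finite iterate $f^k$ so that the induced permutation of factors becomes trivial. At that point $f_*^k$ composed with finitely many slides of type~(\ref{equ: slide M}) is already a pure type~(1) automorphism, so $g_2$ and $g_3$ are both the identity and no reordering is needed; the passage to $f^k$ is harmless since $M\rtimes_{f^k}S^1$ finitely covers $M\rtimes_f S^1$. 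Your approach avoids this covering step at the cost of the commutation argument (which does work: conjugating a partial conjugation by a factor automorphism or an interchange yields another partial conjugation, and likewise factor automorphisms can be pushed past interchanges). Both routes are standard and equally valid; the paper's is marginally more economical.
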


Invoking again Mineyev's work~\cite{Min1,Min2}, we can derive that for $M$ as in Corollary \ref{c:connectedsumshyperbolic}, the semi-direct product $\pi_1(M)\rtimes_{f_*}\Z$ is not hyperbolic. This conclusion holds more generally, for $M$ as in Theorem \ref{t:general}, provided that an aspherical summand $M_i$ exists in the prime decomposition of $M$. As pointed out in Remark \ref{r:K-S}, a purely group theoretic argument can be given to show that $\pi_1(M)\rtimes_{f_*}\Z$ is not hyperbolic.

\subsection*{Outline}
In Section \ref{s:preliminary} we collect some simple observations on mapping tori. In Section \ref{s:general} we prove Theorem \ref{t:general}. Section \ref{s:3-mfd} is devoted to the proof of Theorem \ref{t:3-mfdoverS^1}. In Section \ref{s:remarks} we discuss briefly Corollaries \ref{c:fiber4}, \ref{c:mappingtorizero}, \ref{c:hyperbolic} and \ref{c:connectedsumshyperbolic}.

\subsection*{Acknowledgements}
We would like to thank Brian Bowditch, Misha Kapovich, Jean-Fran\c cois Lafont, Clara L\"oh and Zlil Sela for useful comments. Both authors gratefully acknowledge support by the Swiss National Science Foundation, under grants FNS200021$\_$169685 and FNS200020$\_$178828.

\section{Preliminary observations}\label{s:preliminary}\label{subsubsection: fund class}

Let $X$ be a CW-complex and $f\colon X\longrightarrow X$ be a continuous (not necessarily bijective) map. Recall that the mapping torus $X \rtimes_f S^1$ of $f$, defined as 
$$ X\rtimes_f S^1=X\times [0,1]/((x,0)\sim (f(x),1),$$
naturally projects onto the circle, but it is not necessarily a fiber bundle when $f$ is not a homeomorphism. Its fundamental group is the semi-direct product
$$\pi_1(X\rtimes_f S^1)=\pi_1(X)\rtimes_{f_*}\mathbb{Z},$$
where the positive generator of $\mathbb{Z}$ acts on $\pi_1(X)$ by $\gamma\mapsto f_*(\gamma)$. There is a long exact sequence in integral homology
$$\dots \longrightarrow H_{q+1}(X)\overset{i_*}{\longrightarrow}H_{q+1}(X\rtimes_f S^1)\longrightarrow H_{q}(X)\overset{1-f_*}{\longrightarrow}H_{q}(X)\longrightarrow \dots,$$
where $i_*$ is induced by the inclusion $X=X\times \{0\}\hookrightarrow X\times [0,1]$. Furthermore, this long exact sequence is natural. More precisely, let $X'\rtimes_{f'} S^1$ be the mapping torus of $f'\colon X'\longrightarrow X'$, and suppose that there exists a continuous map $\varphi\colon X\longrightarrow X'$  such that the diagram
$$
\xymatrix{
X \ar[r]^{f} \ar[d]_\varphi & X \ar[d]^\varphi \\
X' \ar[r]^{f'}  &X' \\
}
$$
commutes up to homotopy. Then $\varphi$ induces a continuous map, still denoted $\varphi\colon X\rtimes_f S^1\longrightarrow X'\rtimes_{f'} S^1$, and the following diagram commutes:
\begin{equation}\label{les commute} 
\xymatrix{
\dots \ar[r] & H_{q+1}(X)\ar[r]^{i_* \ \  \ \ \ }\ar[d]^{\varphi_*}&H_{q+1}(X\rtimes_f S^1) \ar[d]^{\varphi_*}\ar[r]& H_{q}(X) \ar[d]^{\varphi_*}\ar[r]^{1-f_*}&H_{q}(X) \ar[d]^{\varphi_*}\ar[r]& \dots
\\ 
\dots \ar[r] & H_{q+1}(X')\ar[r]^{i_* \ \ \ \ \ }&H_{q+1}(X'\rtimes_{f'} S^1) \ar[r]& H_{q}(X') \ar[r]^{1-f'_*}&H_{q}(X') \ar[r]& \dots.
\\ }
\end{equation}

In our case, the CW-complex $X$ will always be $q$-dimensional, for $q\geq 3$, so that the above long exact sequence in homology becomes
\begin{equation}0\longrightarrow H_{q+1}(X\rtimes_f S^1)\longrightarrow H_{q}(X)\overset{1-f_*}{\longrightarrow}H_{q}(X)\longrightarrow \dots.\label{equ:les}\end{equation}
When $f$ is an orientation preserving self-homeomorphism of a closed oriented $q$-dimensional manifold $X$, we obtain an isomorphism $H_{q+1}(X\rtimes_f S^1)\cong H_q(X)\cong \mathbb{Z}$ which maps the fundamental class of the $(q+1)$-dimensional manifold $X\rtimes_f S^1$ to the fundamental class of $X$. We want to slightly generalize the notion of fundamental class to our setting:
\begin{itemize}
\item For $X=M_1\vee \dots \vee M_n\vee Y$, where $M_i$ are closed oriented $q$-dimensional manifolds and $Y$ is a $1$-dimensional CW-complex, we still call $[M_1]+\dots +[M_n]\in H_q(X)$ the fundamental class of $X$ and denote it by $[X]$.
\item If for $X$ as above, the continuous map $f\colon X\longrightarrow X$ satisfies
$$[X]=f_*([X])\in H_q(X),$$
then there is a unique class in $H_{q+1}(X\rtimes_f S^1)$ mapped to $[X]$ in the long exact sequence (\ref{equ:les}). We denote it by $[X\rtimes_f S^1]$ and call it the fundamental class of $X\rtimes_f S^1$. We further define the simplicial volume of $X\rtimes_f S^1$, denoted $\|X\rtimes_f S^1\|$, as the $\ell^1$-semi-norm of the fundamental class of $X\rtimes_f S^1$. 
\end{itemize}

To finish this preliminary section, let us quote the following general fact (see also~\cite{LN}), which will reduce our discussion to mapping tori of finite covers of the fiber.

\begin{lem}\label{l:finitecover}
Let $M$ be a manifold which has a finite cover $\overline{M}$ such that $\|\overline{M}\rtimes_g S^1\|=0$ for every homeomorphism $g\colon\overline{M}\longrightarrow\overline{M}$. Then $\|M\rtimes_f S^1\|=0$ for every homeomorphism $f\colon M\longrightarrow M$.
\end{lem}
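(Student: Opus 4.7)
The plan is to realise some finite covering of $M\rtimes_f S^1$ as a mapping torus of $\overline{M}$ by a self-homeomorphism, and then to invoke multiplicativity of the simplicial volume under finite coverings (which gives $\|\overline{N}\|=d\cdot\|N\|$ for any $d$-fold cover $\overline{N}\to N$ of closed oriented manifolds). As a preliminary reduction, pulling back the degree-$k$ self-cover of $S^1$ along the projection $M\rtimes_f S^1\to S^1$ yields a $k$-fold covering $M\rtimes_{f^k}S^1\to M\rtimes_f S^1$ for every $k\geq 1$, so it suffices to produce some $k$ for which $f^k$ lifts to a self-homeomorphism of $\overline{M}$.

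Set $\pi=\pi_1(M)$ and $\overline{\pi}=\pi_1(\overline{M})$, the latter of finite index $n$ in $\pi$. Since $\pi$ is finitely generated, it has only finitely many subgroups of index $n$, hence only finitely many conjugacy classes of such subgroups. The outer automorphism induced by $f$ permutes this finite set, so some iterate $f_*^k$ must fix the conjugacy class of $\overline{\pi}$. The standard covering-space lifting criterion then produces, after choosing compatible basepoints, a continuous map $g\colon\overline{M}\to\overline{M}$ lifting $f^k$. A quick verification shows that $g$ is actually a homeomorphism: $f^{-k}$ lifts for exactly the same reason to some $h\colon\overline{M}\to\overline{M}$, and both $g\circ h$ and $h\circ g$ are lifts of $\id_M$, hence deck transformations of the finite cover, which furnishes $g$ with a two-sided inverse.

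The covering $\overline{M}\to M$ now extends to an $n$-fold covering $\overline{M}\rtimes_g S^1\to M\rtimes_{f^k} S^1$ compatible with the mapping-torus structure, and composing with the earlier $k$-fold cover produces a finite covering $\overline{M}\rtimes_g S^1\to M\rtimes_f S^1$. Multiplicativity of the simplicial volume together with the hypothesis $\|\overline{M}\rtimes_g S^1\|=0$ then forces $\|M\rtimes_f S^1\|=0$. The only step requiring genuine attention is the pigeonhole argument above: the subgroup $\overline{\pi}$ itself need not be preserved by $f_*$, but replacing $f$ by a suitable power does preserve its conjugacy class, which is all that is needed for the lift to exist.
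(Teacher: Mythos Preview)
Your argument is correct and follows essentially the same route as the paper: build a finite cover of $M\rtimes_f S^1$ of the form $\overline{M}\rtimes_g S^1$ by first passing to $M\rtimes_{f^k}S^1$ and then lifting $f^k$ to $\overline{M}$, and conclude by multiplicativity of simplicial volume. The only difference is cosmetic: the paper applies the pigeonhole argument directly to the finite set of index-$n$ subgroups (so that $f_*^k(\overline\pi)=\overline\pi$ on the nose), whereas you pass through conjugacy classes and then invoke the lifting criterion---this works but is slightly more than necessary.
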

\begin{proof}
By the multiplicativity of the simplicial volume under finite coverings, it suffices to show that any mapping torus $M\rtimes_f S^1$ is finitely covered by a mapping torus $\overline{M}\rtimes_g S^1$.

Suppose $f\colon M\longrightarrow M$ is a homeomorphism. Since $\pi_1(\overline{M})$ has finite index in $\pi_1(M)$ and $\pi_1(M)$ has finitely many subgroups of a fixed index (being finitely generated), there is some natural number $k$ such that $$f^k_*(\pi_1(\overline{M}))=\pi_1(\overline{M}).$$
The desired finite cover of $M\rtimes_f S^1$ is then given by the mapping torus $\overline{M}\rtimes_{f^k} S^1$. Indeed, $\overline{M}\rtimes_{f^k} S^1$ is a covering of degree $[\pi_1(M):\pi_1(\overline{M})]$ of the mapping torus $M\rtimes_{f^k} S^1$, which is a degree $k$ covering of $M\rtimes_f S^1$.
\end{proof}

\section{Proof of Theorem \ref{t:general}}\label{s:general}

As we mentioned in the introduction, the case of (reducible) 3-manifolds is contained in the more general statement of Theorem \ref{t:general}. We thus prove Theorem \ref{t:general} first.

\medskip

Let 
$$M = M_ 1 \# M_ 2  \# \cdots \# M_n \# (\#_m S ^1  \times S^{q-1})$$
be a closed oriented $q$-dimensional manifold, where $M_i$ are aspherical, and $f\colon M\longrightarrow M$ be a self-homeomorphism of $M$ such that the induced automorphism $f_*\colon\pi_1(M)\longrightarrow\pi_1(M)$ fulfils the assumptions of Theorem \ref{t:general}.

Set 
$$M^\vee:= M_1\vee \dots \vee M_n\vee (\vee_{j=1}^m S^1)$$
and observe that
$$\pi_1(M)=\pi_1(M^\vee)=\pi_1(M_1)*\cdots *\pi_1(M_n)* F_m.$$
Furthermore, $M^\vee$ is aspherical and hence a model for the classifying space $B\pi_1(M)$. Therefore, every self-homeomorphism of $M$ admits a counterpart on $M^\vee$, i.e. a self-map $M^\vee\longrightarrow M^\vee$ inducing the same map on the fundamental group. 

We will now give an explicit description of four types of maps inducing the four types of automorphisms that appear in Theorem \ref{t:general}. For this it is useful to think of $M$ as a punctured $q$-sphere $W$ with $n+2m$ open $q$-balls removed and where the summands $M_i$ and $S^1\times S^{q-1}$ are obtained as follows: Let
\begin{equation}\label{equ: spheres} 
S_1, S_2, ... , S_n, S_{n+1,0}, S_{n+1,1}, ..., S_{n+m,0}, S_{n+m,1}
\end{equation}
be the boundary components of $W$. For each summand $M_i$, $i=1,...,n$, choose a $q$-ball $D_i$ and attach $M_i'= M_i \setminus \mathrm{int}(D_i)$ to  $S_i$ along $\partial D_i$. For  $n+ 1\le j \le n+m$, let $S_j \times I$ be  a  copy of  $S^{q-1}\times I$  attached  to   $W$ by identifying $S_j \times   \{0\}$ with  $S_{j,0}$ and $S_{j} \times \{1\}$ with $S_{j,1}$ to form an $S^1  \times  S^{q-1}$ summand.
We can then explicitly describe (a model of) the classifying map
$$\varphi\colon M\longrightarrow M^\vee=B\pi_1(M).$$
Define $b\in M^\vee$ to be the join of the wedge product and assume that $b\in D_i$ for each $i=1,\dots,n$. The classifying map can be defined as follows:
\begin{itemize}
\item collapse $W$ to the join $b$ in $M^\vee$,
\item send $M_i'=M_i\setminus \mathrm{int}(D_i)$ homeomorphically (and canonically) to $M_i\setminus \{b\}$,
\item project the $j$-th $S^{q-1}\times I$ onto $I$ and further to the $j$-th circle $S^1$ of the bouquet of circles $\vee_{j=1}^m S^1$ in $M^\vee$. 
\end{itemize}
From this point of view, it is obvious how to define, for each automorphism of type 1, 2 or 3 the corresponding map (even a homeomorphism) on $M^\vee$ that not only induces the same map on $\pi_1(M)=\pi_1(M^\vee)$ but even commutes with the classifying map as given above. Type 4 automorphisms -- the slides -- require more care. Recall 
their description in (\ref{equ: slide M}) and (\ref{equ: slide S}) for $M_i$ and each end of $S^{q-1}\times I$ respectively.
Choose a representative $\alpha$ of $\delta\in \pi_1(M)=\pi_1(M^\vee)\cong \pi_1(M^\vee,b)$.

For slide automorphisms as in (\ref{equ: slide M}), define $g_4^\vee\colon M^\vee\longrightarrow M^\vee$ as follows: 
\begin{itemize}
\item it is the identity on $M_j$, for $j\neq i$, and on $\vee_{j=1}^m S^1$,
\item it maps $M'_i=M_i\setminus \mathrm{int}(D_i)$ homeomorphically (and canonically as we have chosen $b\in D_i$) to $M_i\setminus \{b\}$.
\item it is the composition
$$D_i\setminus \{b\}\cong S^{q-1}\times [0,1]\longrightarrow [0,1]\overset{\alpha}{\longrightarrow} M,$$
of the projection on the second factor with $\alpha$ on $D_i\setminus \{b\}$.
\end{itemize}
See Figure \ref{f:pinched} for an illustration of $g_4^\vee$. It is obvious by construction that $(g_4)_*=(g_4^\vee)_*$ on $\pi_1(M)=\pi_1(M^\vee)$.

\begin{figure}[ht!]
\labellist
\pinlabel $M_1$ at -15 25
\pinlabel $M_i$ at -5 275
\pinlabel $M_n$ at 305 255
\pinlabel $S^1$ at 240 25
\pinlabel $S^1$ at 310 125
\pinlabel $M_1$ at 415 25
\pinlabel $M_i$ at 425 275
\pinlabel $M_n$ at 735 255
\pinlabel $S^1$ at 670 25
\pinlabel $S^1$ at 740 125
\pinlabel ${\color{red} \alpha}$ at 590 55
\pinlabel $\stackrel{g_4^\vee}\longrightarrow$ at 375 150
\endlabellist
\centering
\includegraphics[width=13cm]{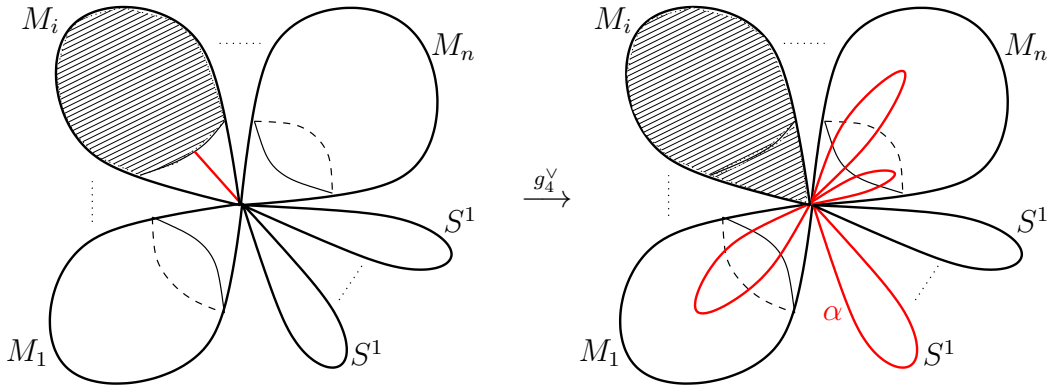}
\caption{\small The model space $M^\vee=M_1\vee\cdots\vee M_n\vee(\vee_{j=1}^m S^1)$ and the effect of $g_4^\vee$.}
\label{f:pinched}
\end{figure}

In the second case (i.e. for slide automorphisms as in (\ref{equ: slide S})), define $g_4^\vee$ as follows:
\begin{itemize}
\item it maps the $j$-th circle $S^1$ to the concatenation of $\alpha$ with the $j$-th circle, or of the $j$-th circle with $\alpha$,
\item it is the identity everywhere else.
\end{itemize}
Again it is obvious that  $(g_4)_*=(g_4^\vee)_*$ on $\pi_1(M)=\pi_1(M^\vee)$.

Set now
$$f^\vee\colon M^\vee\longrightarrow M^\vee$$
to be the map obtained 
by composing the self-maps on $M^\vee$ as described above, so that
$$f^\vee_*=(g_4^\vee)_*\circ (g_3^\vee)_* \circ (g_2^\vee)_*\circ (g_1^\vee)_*.$$

Since $f$ and $f^\vee$ induce identical maps on the fundamental group $\pi_1(M)=\pi_1(M^\vee)$ the diagram  
$$
\xymatrix{
M \ar[r]^{f} \ar[d]_\varphi & M \ar[d]^\varphi\\
M^\vee\ar[r]^{f^\vee}  &M^\vee \\
}
$$
commutes up to homotopy, and the classifying map $\varphi:M\longrightarrow M^\vee$ induces a map still denoted $\varphi$ between the corresponding mapping tori,
$$\varphi\colon M\rtimes_f S^1\longrightarrow M^\vee \rtimes_{f^\vee}S^1.$$
By construction, we have $\varphi_*([M])=[M^\vee]\in H_q(M^\vee)$ (where the fundamental class for not necessarily manifolds is as defined in Section \ref{subsubsection: fund class}). Thus the commuting long exact sequences in (\ref{les commute}) imply that $(1-f^\vee)_*([M^\vee])=0$, so that the fundamental class $[M\rtimes_f S^1]\in H_{q+1}(M\rtimes_f S^1)$ exists and
$$\varphi_*([M\rtimes_f S^1])=[M^\vee \rtimes_{f^\vee}S^1].$$
Since $\varphi$ induces an isomorphism between the fundamental groups of the two mapping tori, we obtain
$$\| M\rtimes_f S^1\| =\|M^\vee \rtimes_{f^\vee} S^1\|$$
as a consequence of Gromov's Mapping Theorem \cite[Section 3.1]{Gromov}. Our goal is now to prove $\|M^\vee \rtimes_{f^\vee} S^1\|=0$.

To this end, set
$$M^S:=M^\vee\vee (\vee_{k=1}^r S^1),$$
where $r$ is the number of slide automorphisms in the chosen decomposition of the original self-automorphism induced by $f$. Define a map 
$$A\colon M^S\longrightarrow M^\vee$$
as the identity on $M^\vee$ and sending the $k$-th circle $S^1$ of the bouquet of circles $\vee_{k=1}^r S^1$ to the loop $\alpha_k$ used to define the component $g_{4,k}^\vee$ of $g_4^\vee$. Let us now define a self-map $f^S\colon M^S\longrightarrow M^S$ for which the diagram
\begin{equation}\label{equ:MM commute}
\xymatrix{
M^\vee \ar[r]^{f^\vee}  & M^\vee \\
M^S \ar[u]^{A} \ar[r]^{f^S}  &M^S \ar[u]_{A} \\
}
\end{equation}
commutes. Recall that $f^\vee=g_4^\vee \circ g_3^\vee\circ g_2^\vee\circ g_1^\vee$. For $\ell=1,2,3$, define $g_\ell^S$ to be $g_\ell^\vee$ on $M^\vee$ and the identity on $\vee_{k=1}^r S^1$. Define $g_{4,k}^S$ as the identity on  $\vee_{k=1}^r S^1$ and almost precisely $g_{4,k}^\vee$ on $M^\vee$: If the slide automorphism $(g_{4,k})_*$ has the form (\ref{equ: slide M}), 
then $g_{4,k}^S$ is $g_{4,k}^\vee$ except for  $D_{i_{k}}\setminus \{b\}$, which is not mapped onto $\alpha_k$ but onto the $k$-th circle in the bouquet of circles $\vee_{k=1}^r S^1$ in $M^S$. While if the slide automorphism $(g_{4,k})_*$ has the form (\ref{equ: slide S}), 
then $g_{4,k}^S$ is $g_{4,k}^\vee$ except for the $j_k$-th circle $S^1$ of the first bouquet of circle $\vee_{j=1}^m S^1$ of $M^S$ which is not mapped to the concatenation of $\alpha$ and itself, but to the concatenation of the $k$-th circle in the (second) bouquet of circles $\vee_{k=1}^r S^1$ in $M^S$ and itself or vice versa. Each of these maps commutes with $A$, and hence so does the composition
$f^S=g_4^S \circ g_3^S\circ g_2^S\circ g_1^S,$
so that $A$ induces a map (still denoted by $A$) between the corresponding mapping tori
$$M^S\rtimes_{f^S} S^1 \overset{A}{\longrightarrow}M^\vee \rtimes_{f^\vee} S^1.$$
Clearly, $A_*([M^S])=[M^\vee]$ and by the commutativity of the diagram (\ref{equ:MM commute}) also
$$ A_*f_*^S([M^S])=f^\vee_*A_*([M^S])=f^\vee_*([M^\vee])=[M^\vee].$$
In view of the commuting long exact sequences from (\ref{les commute}) and the fact that $A_*\colon H_q(M^S)\longrightarrow H_q(M^\vee)$ is an isomorphism, we obtain that $(1-f^S_*)([M^S])=0$, so that the fundamental class $[M^S\rtimes_{f^S} S^1]\in H_{q+1}(M^S\rtimes_{f^S} S^1)$ is defined and satisfies $A_*([M^S\rtimes_{f^S} S^1])=[M^\vee \rtimes_{f^\vee}S^1]$. Therefore,
$$\| M^\vee \rtimes_{f^\vee}S^1\| \leq \| M^S\rtimes_{f^S} S^1\|.$$
It is thus sufficient to show that $\| M^S\rtimes_{f^S} S^1\|=0$.

\begin{figure}[ht!]
\labellist
\pinlabel $M_1$ at -5 30
\pinlabel $M_i$ at -10 275 
\pinlabel $M_n$ at 315 255
\pinlabel $\vee_{j=1}^mS^1$ at 175 -10
\pinlabel  $M_1$ at 425 30
\pinlabel $M_i$ at 419 275
\pinlabel $M_n$ at 745 255
\pinlabel $\vee_{k=1}^rS^1$ at 325 79
\pinlabel $\stackrel{g_4^S}\longrightarrow$ at 375 150
\endlabellist
\centering
\includegraphics[width=13cm]{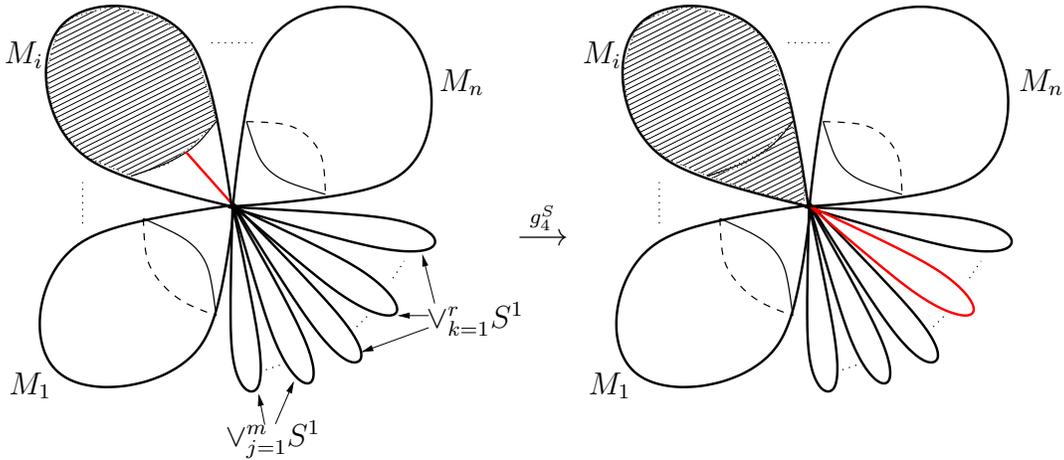}
\vspace{5pt}
\caption{\small The model space $M^\vee$ augmented by the bouquet $\vee_{k=1}^r S^1$ and the map $g_4^S$.}
\label{f:pinchedH}
\end{figure}

The advantage with this new mapping torus is that the slides have no more mixing effect on $M^S$. What we mean is that originally on $M^\vee$ (and also on $M$), the slide $g_{4,k}^\vee$ sends one irreducible summand, say $M_{j_k}$ to $M_{j_k}\cup \{\alpha_k\}$ (respectively $M_{j_k}$ union a neighborhood of $\alpha_k$), where $\alpha_k$ can lie all over $M$. Now, $g_{4,k}^S$ maps $M_{j_k} $ to the union of $M_{j_k}$ and the $k$-th circle of the (artificially) added wedge $\vee_{k=1}^r S^1$. Furthermore, $g_1^S$ and $g_3^S$ leave each $M_i$ invariant, while $g_2^S$ permutes them. Let $\mathcal{N}$ be the order of the permutation of $\{M_1,\dots,M_n,S^1,\dots,S^1\}$ induced by $g_3^S$. Then $(f^S)^\mathcal{N}$ maps each $M_i$ to $M_i\vee(\vee_{k=1}^r S^1)$ and is the identity on $\vee_{k=1}^r S^1$. Set $f_i:=(f^S)^\mathcal{N}\vert_{M_i\vee(\vee_{k=1}^r S^1)}$ and let $j_i\colon (M_i\vee (\vee_{k=1}^r S^1 ) ) \rtimes_{f_i} S^1\hooklongrightarrow M^S\rtimes_{(f^S)^\mathcal{N}} S^1$ denote the canonical inclusion. Then
\[
[M^S\rtimes_{(f^S)^\mathcal{N}} S^1]=\sum_{i=1}^n (j_i)_*[(M_i\vee (\vee_{k=1}^r S^1 ))\rtimes_{f_i} S^1].
\]
Since $M^S\rtimes_{(f^S)^\mathcal{N}} S^1$ is a finite cover of  $M^S\rtimes_{f^S} S^1$, we have
\[
\mathcal{N}\cdot \|M^S\rtimes_{f^S} S^1\|=\|M^S\rtimes_{(f^S)^\mathcal{N}}S^1\|\leq \sum_{i=1}^r \|(M_i \vee (\vee_{k=1}^r S^1))\rtimes_{f_i}S^1\|.
\]
It thus remains to show that $\|(M_i \vee( \vee_{k=1}^r S^1))\rtimes_{f_i}S^1\|=0$ for every $i$. 

By construction, the map $f_i\colon M_i\vee(\vee_{k=1}^r S^1)\longrightarrow M_i\vee(\vee_{k=1}^r S^1)$ is 
\begin{itemize}
\item a homeomorphism from $M_i\setminus \mathrm{int}(D_i)$ (for a possibly bigger ball $D_i$ than the one considered above) to $M_i\setminus \{b\}$,
\item the composition of the projection $D_i\setminus\{b\}\cong S^{q-1}\times I\longrightarrow I$ with a closed path in $\vee_{k=1}^r S^1$,
\item the identity on $\vee_{k=1}^r S^1$.
\end{itemize}

Define a map $j\colon M_i\longrightarrow M_i\vee (\vee_{k=1}^r S^1)$ as
\begin{itemize}
\item a (canonical) homeomorphism from $M_i\setminus \mathrm{int}(D_i)$ to $M_i\setminus \{b\}$,
\item $f_i$ on $D_i$. 
\end{itemize}
Observe that $j$ is homotopic to the canonical inclusion $i\colon M_i\hooklongrightarrow M_i\vee (\vee_{k=1}^r S^1) $. Define
$$F_i\colon M_i\longrightarrow M_i$$
as $j^{-1}\circ f_i$ on $M_i\setminus \mathrm{int}(D_i)$ and the identity on $D_i$ and observe that the diagram
$$
\xymatrix{
M_i \ar[r]^{F_i}  \ar[d]_{i} & M_i \ar[d]^{j} \\
M_i\vee (\vee_{k=1}^r S^1) \ar[r]^{f_i}  & M_i\vee  (\vee_{k=1}^r S^1)  \\
}
$$
commutes. Since $i$ and $j$ are homotopic they induce a map
$$I\colon M_i\rtimes_{F_i}S^1\longrightarrow (M_i\vee (\vee_{k=1}^r S^1) )\rtimes_{f_i}S^1.$$
As the map $i$ (or $j$) between the $q$-dimensional CW complexes induce an isomorphism on $H_q$ preserving the respective fundamental classes, the same holds for the induced map $I$ of the mapping tori (again using the commuting long exact sequences (\ref{les commute})). Thus in particular,

$$\| (M_i\vee(\vee_{k=1}^r S^1)) \rtimes_{f_i}S^1\|=\|I_*[M_i \rtimes_{F_i}S^1]\| \leq \| M_i \rtimes_{F_i}S^1\|=0 ,$$
where the last equality comes from the fact that  $F_i$ is a homeomorphism of $M_i$ and by assumption, the simplicial volume of any mapping torus of self-homeomorphisms of each prime summand $M_i$ vanishes.

This finishes the proof of Theorem \ref{t:general}.

\section{Proof of Theorem \ref{t:3-mfdoverS^1}}\label{s:3-mfd}

We now prove Theorem \ref{t:3-mfdoverS^1}. First, we use Theorem \ref{t:general} to reduce Theorem \ref{t:3-mfdoverS^1} to mapping tori of prime 3-manifolds.

\subsection{Mapping tori of reducible 3-manifolds}\label{s:reducible}
 
First, we recall the isotopy types of the orientation-preserving self-homeomorphisms of reducible 3-manifolds. For the discussion in this subsection, we follow McCullough's survey paper~\cite{Mc}, as  well as Zhao's paper~\cite{Zhao}. 

Suppose $M$ is a closed oriented reducible 3-manifold. By the Kneser-Milnor theorem, $M$ admits a non-trivial prime decomposition
$$M = M_ 1 \# M_ 2  \# \cdots \# M_n \# ( \#_m S ^1  \times S^2 ),$$    
where the summands  $M_i$ are irreducible and $m\ge 0$.  
As in the general case described in the beginning of Section \ref{s:general}, it is useful to view $M$ as a punctured $3$-cell $W$ with boundary components
\begin{equation}\label{equ: spheres3dim} S_1, S_2, ... , S_n, S_{n+1,0}, S_{n+1,1}, ..., S_{n+m,0}, S_{n+m,1},\end{equation}
and $M_i'= M_i \setminus \mathrm{int}(D_i)$ and $S^2\times I$  attached.  With this description, we can now list four types of homeomorphisms of $M$. 

\medskip

{\em Type 1. Homeomorphisms  preserving  summands.} These are the homeomorphisms that restrict to the identity on $W$. 

\medskip

{\em Type 2. Interchanges of homeomorphic summands.} If $M_i$ and  $M_j$ are orientation-preserving homeomorphic summands, then a homeomorphism of $M$ is given by fixing all other summands, leaving $W$ invariant, and interchanging $M'_i$ and $M'_j$. 

Similarly, we can interchange two $S^1 \times S^2$ summands, leaving $W$ invariant.

\medskip

{\em Type 3. Spins of $S^ 1 \times S^ 2$ summands.} For each $n + 1\le  j\le  n + m$, a homeomorphism of $M$ can be constructed by interchanging $S_{j,0}$ and $S_{j,1}$, restricting to an orientation-preserving homeomorphism that interchanges the boundary components of $S_j \times I$, and by fixing all other summands, leaving $W$ invariant.

\medskip

{\em Type 4. Slide homeomorphisms.} Let $S$ be one of the boundary spheres from (\ref{equ: spheres}), bounding either an $M'_i$, or an end of a copy of $I\times S^2$. Let $\alpha$ be an arc  in $M$ with start and endpoints in  $S$  intersecting $M_i$, respectively $S^1\times S^2$, only in its endpoints. Let $N\subset N'$ be two regular neighborhoods of $\alpha\cup S$ such that $N\subset \mathrm{Int}(N')$. Then $N'\setminus N$ has two connected components diffeomorphic to $S^2\times (0,1)$ and $T^2\times (0,1)$. Let $T$ denote the latter connected component. Its torus factor should be thought as the product $S^1_\alpha \times S^1$, where $S^1_\alpha$ is close to the path $\alpha$ closed up to a curve in $S$, and the second factor is radial. A slide homeomorphism $g_4\colon M\longrightarrow M$ is defined as the identity on $M\setminus T$, while on $T$ it is the product of a Dehn twist along $S^1_\alpha$ on $S^1_\alpha\times (0,1)$ and the trivial map on the radial factor. 

To understand the effect of $g_4$ on the fundamental group, observe that up to homotopy, any curve not intersecting $S$ is left unchanged by $g_4$ since it is possible to homotope it away from $\alpha\cup S$ and its regular neighborhood $N'$. In contrast, a curve entering through $S$ will be Dehn twisted along $S^1_\alpha$ by $g_4$. For the explicit description of $(g_4)_*\colon\pi_1(M)\longrightarrow\pi_1(M)$ we distinguish the case when $S$ bounds $M'_i$ or an end of a copy of $I\times S^2$. Recall that 
$$\pi_1(M)=\pi_1(M_1)*\dots *\pi_1(M_n)*F_m,$$
where $F_m$ denotes the free group of rank $m$, is taken with respect to a base point in $W$. Let $\delta\in \pi_1(M)$ represent $\alpha$, or more precisely $S^1_\alpha$. If $S$ bounds $M'_i$, then the slide $g_4\colon M\longrightarrow M$ induces an automorphism on $\pi_1(M)$ as given by (\ref{equ: slide M}). 
If $S$ bounds either of the two boundary components of the $j$-th copy of $I \times S^2$, then the slide $g_4\colon M\longrightarrow M$ induces an automorphism as given by (\ref{equ: slide S}).

For more details on slide homeomorphisms, and especially for explicit description of the corresponding Dehn twists, we refer to~\cite[Section 2.2]{Zhao}. 

\medskip

McCullough shows\footnote{As McCullough remarks, his proof was based on an argument by Scharlemann; cf.~\cite[Appendix A]{Bon}.} in \cite[page 69]{Mc} that every orientation-preserving self-homeomorphism of a closed oriented connected $3$-manifold is isotopic to a composite of homeomorphisms of these four types. In fact, McCullough's proof contains even the following more precise form:

\begin{thm} \label{Mc2} 
Let $M$ be a closed oriented connected 3-manifold and $f$ be an orientation-preserving homeomorphism of $M$. Then up to isotopy
\[
f  = g_4\circ g_3\circ g_2\circ g_1,
\]
where each $g_\ell$ is a composition of finitely many homeomorphisms of type $\ell$ on $M$. 
\end{thm}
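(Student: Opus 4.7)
The plan is to isotope $f$ so it preserves the system of defining spheres $\Sigma := S_1 \cup \cdots \cup S_{n+m,1}$ of the prime decomposition, and then decompose the restriction $f|_W$ into factors of types $2$, $3$, and $4$, leaving a type $1$ remainder on the summands. The pieces will be extracted from the left, which naturally yields the ordering $g_4 g_3 g_2 g_1$.

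First I would invoke the classical uniqueness theorem of Laudenbach for systems of essential $2$-spheres in reducible $3$-manifolds: any two such sphere systems realizing the same prime decomposition are ambient isotopic. Applied to $\Sigma$ and $f(\Sigma)$, this provides an isotopy of $f$ after which $f(\Sigma)=\Sigma$ setwise. The map $f$ then permutes the pieces of $M\setminus\Sigma$, namely $W$, the trimmed summands $M'_i$, and the $m$ cylinders $S^2\times I$; as $W$ is the unique piece with $n+2m$ boundary components (the small cases $n+2m\leq 2$ being handled separately), $f(W)=W$.

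Next I would focus on the restriction $f|_W\colon W\longrightarrow W$, which permutes the boundary spheres of $W$. The key technical ingredient, attributed by McCullough to Scharlemann (cf.~\cite[Appendix A]{Bon}), is that the mapping class group of the punctured $3$-cell $W$ is generated by three classes of moves: interchanges of homeomorphic boundary spheres (extending to type $2$ on $M$), swaps of paired spheres $S_{j,0}\leftrightarrow S_{j,1}$ bounding a common $S^2\times I$ summand (extending to type $3$ spins on $M$), and Dehn twists along the tori $T$ arising in the construction of slide homeomorphisms (type $4$ on $M$). Using this presentation I would write
\[
f|_W \;\simeq\; g_4^W \circ g_3^W \circ g_2^W,
\]
where each $g_\ell^W$ is a composition of type $\ell$ moves restricted to $W$, and each extends canonically to a type $\ell$ homeomorphism $g_\ell$ of $M$ (identity on untouched summands, with the prescribed behavior on the $S^2\times I$ cylinders in the case of spins).

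Finally, I would set $h := g_2^{-1}\circ g_3^{-1}\circ g_4^{-1}\circ f$; by construction $h|_W$ is isotopic to $\id_W$, and an isotopy supported in a collar of $\partial W$ makes $h|_W=\id_W$ exactly. Thus $h$ is supported in $M\setminus\mathrm{int}(W)$ and fixes every boundary sphere of $W$, so its restriction to each $M'_i$ extends across $D_i$ to an orientation-preserving self-homeomorphism of $M_i$, while the $S^2\times I$ pieces contribute only isotopy-trivial factors. Hence $h$ is a type $1$ map $g_1$, giving $f\simeq g_4\circ g_3\circ g_2\circ g_1$. The main obstacle is the mapping class group computation for $W$; Scharlemann's argument (invoked by McCullough) resolves it, while the remaining steps are careful bookkeeping of the permutations of summands and boundary spheres.
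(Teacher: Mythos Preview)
The paper does not supply its own proof of this theorem; it quotes the statement from McCullough and credits the underlying argument to Scharlemann. Your sketch attempts to reconstruct that argument, but its opening move contains a genuine error that propagates through the rest.

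You assert, via Laudenbach, that any two sphere systems realizing the prime decomposition are ambient isotopic, so that after an isotopy $f(\Sigma)=\Sigma$. This is false. Laudenbach's theorem concerns single homotopic $2$-spheres, not systems, and reducing sphere systems for $M$ are in general \emph{not} ambient isotopic. Already for $M=M_1\# M_2$ with $M_i$ irreducible aspherical, a slide of $M_1$ along a nontrivial $\delta\in\pi_1(M_2)$ fixes $S_1$ but moves $S_2$ to a sphere such that the pair $\{S_1,g_4(S_2)\}$ is not ambient isotopic to $\{S_1,S_2\}$: if it were, a further isotopy would make $g_4$ preserve $W$, hence induce a factor-preserving automorphism of $\pi_1(M_1)*\pi_1(M_2)$, contradicting $(g_4)_*\colon\gamma_1\mapsto\delta\gamma_1\delta^{-1}$. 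The same example shows why your description of $\mathrm{MCG}(W)$ cannot be right: a nontrivial slide requires the arc $\alpha$ to leave $W$ (since $\pi_1(W)=1$, any $\alpha\subset W$ gives $\delta=1$), so slide homeomorphisms are never supported in $W$ and are not elements of $\mathrm{MCG}(W)$ at all. In McCullough's actual argument the logic is reversed: Scharlemann's innermost-sphere technique is used to find a composition $g_4$ of slides such that $g_4^{-1}\circ f$ carries $\Sigma$ to $\Sigma$; only then does one read off the permutation of summands (types~$2$ and~$3$) from the action on $\partial W$ and obtain the type~$1$ remainder on the $M_i'$. The ``mapping class group computation for $W$'' you flag as the main obstacle is in fact easy (it is essentially a permutation group together with sphere twists); the real content, and the actual role of Scharlemann's argument, is showing that slides suffice to normalize the sphere system.
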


Observe that if $M$ is a reducible 3-manifold with no aspherical summands in its prime decomposition, i.e. $M=(\#_m S^2\times S^1)\#(\#_{i=1}^n S^3/Q_i)$, where $Q_i$ are finite groups, then $M$ is rationally inessential (i.e. its classifying map is trivial in degree three rational homology) and finitely covered by a connected sum $\# S^2\times S^1$ (this covering corresponds to the kernel of the projection $\pi_1(M)\longrightarrow Q_1\times\cdots\times Q_n$~\cite{KN}). Thus the mapping torus of every homeomorphism $f\colon M\longrightarrow M$ is also rationally inessential and so it has zero simplicial volume. We may therefore assume that the reducible 3-manifold $M$ always contains an aspherical summand in its prime decomposition. Moreover, we may also assume, after passing to finite covering coverings if necessary, that each irreducible summand is aspherical.

By Theorem \ref{Mc2}, the automorphism on $\pi_1(M)$ induced by a self-homeomorphism $f$ of $M$ is a finite composition of the four types of automorphisms of Theorem \ref{t:general}. Thus, in order to show that $\|M\rtimes_f S^1\|=0$, it suffices by Theorem \ref{t:general} to show that the simplicial volume of any mapping torus of each prime summand of $M$ vanishes.

\subsection{Mapping tori of prime 3-manifolds}\label{s:prime}

Now we show that indeed the mapping torus of any prime 3-manifold has zero simplicial volume, and thus finish the proof of Theorem \ref{t:3-mfdoverS^1}.

\subsubsection{Non-aspherical prime fibers}

We begin with the easiest cases, namely, when the 3-manifold fiber $M$ is covered either by $S^2\times S^1$ or by $S^3$. This is actually discussed above, but we include it here as well for completeness.
As for connected sums of such spaces, $M$ is rationally inessential, so is the mapping torus $M\rtimes_f S^1$ for any homeomorphism $f\colon M\longrightarrow M$, and hence $\|M\rtimes_f S^1\|=0$. Alternatively, we can simply invoke the fact that a fiber bundle for which the fiber has amenable fundamental group has vanishing simplicial volume~\cite{Gromov} (which will be used several times below), to conclude that $\|M\rtimes_f S^1\|=0$ for every homeomorphism $f\colon M\longrightarrow M$.

\subsubsection{Irreducible aspherical fibers}

Now, we deal with the remaining cases of mapping tori of prime 3-manifolds, namely with the cases where the fiber is an irreducible aspherical 3-manifold. Recall that a closed irreducible aspherical 3-manifold either possesses one of the geometries $\mathbb{H}^3$, $Sol^3$, $Nil^3$, $\R^3$, $\widetilde{SL_2}$ or $\mathbb{H}^2\times\R$, or it has non-trivial JSJ-decomposition.

\subsubsection*{Hyperbolic fibers}\label{ss:hyperbolic}
If the fiber $M$ is hyperbolic, i.e. it possesses the geometry $\mathbb{H}^3$, then by Mostow rigidity every self-homeomorphism of $M$ is isotopic to a periodic map. This means that 
any mapping torus $M\rtimes_f S^1$ is covered by a product with a circle factor, which has vanishing simplicial volume. By the multiplicativity of the simplicial volume under taking finite coverings, we deduce $\|M\rtimes_f S^1\|=0$.

\subsubsection*{Amenable fibers}
Next, suppose that the fiber $M$ possesses one of the geometries $Sol^3$, $Nil^3$, or $\R^3$. Then $M$ is virtually a mapping torus of $T^2$ \cite{Scott}. In particular, the fundamental group of $M$ fits (up to finite index subgroups) into an extension
\[
1\longrightarrow\Z^2\longrightarrow\pi_1(M)\longrightarrow\Z\longrightarrow 1.
\] 
Since Abelian groups are amenable and extensions of amenable-by-amenable groups are again amenable, we deduce that $\pi_1(M)$ is  amenable. Thus, each mapping torus of $M$ has zero simplicial volume~\cite{Gromov}. 

\subsubsection*{Non-amenable circle bundles}
Now, we deal with the last two geometries, namely with $\widetilde{SL_2}$ and $\mathbb{H}^2\times\R$. If the fiber $M$ possesses one of the latter geometries, then $M$ is virtually a circle bundle over a closed hyperbolic surface~\cite{Scott}. In that case, any mapping torus of $M$ is virtually a circle bundle:

\begin{lem}\label{l:circlebundle}
Let $\Sigma$ be a closed oriented hyperbolic surface and $M$ be a circle bundle over $\Sigma$. Then any mapping torus $M\rtimes_f S^1$ is a circle bundle over a mapping torus of $\Sigma$. 
\end{lem}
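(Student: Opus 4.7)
The plan is to reduce the lemma to showing that every self-homeomorphism $f$ of $M$ is isotopic to a homeomorphism that preserves the fibers of the circle bundle $p\colon M\to\Sigma$, and then to descend the mapping torus construction along $p$.

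First I would invoke uniqueness of the Seifert fibration. Because $\Sigma$ is a closed hyperbolic surface, the base orbifold has negative Euler characteristic, and by a classical theorem of Waldhausen the circle fibration $p\colon M\to\Sigma$ is the unique Seifert fibration of $M$ up to isotopy. Consequently $f$ must send fibers to fibers up to isotopy of $M$, so after replacing $f$ by an isotopic fiber-preserving homeomorphism $\tilde f$ (which does not change the homeomorphism type of the mapping torus), I obtain an induced homeomorphism $\bar f\colon\Sigma\to\Sigma$ fitting into a commuting square
$$
\xymatrix{
M \ar[r]^{\tilde f}\ar[d]_{p} & M \ar[d]^{p} \\
\Sigma \ar[r]^{\bar f} & \Sigma.
}
$$

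Passing to mapping tori, the pair $(p,\id_{[0,1]})$ is compatible with the identifications $(x,0)\sim(\tilde f(x),1)$ on the top and $(y,0)\sim(\bar f(y),1)$ on the bottom, and therefore descends to a continuous map
$$P\colon M\rtimes_{\tilde f} S^1\longrightarrow \Sigma\rtimes_{\bar f} S^1.$$
The preimage of a point $[y,t]$ is exactly the circle fiber $p^{-1}(y)\cong S^1$, and local triviality of $P$ is inherited from that of $p$ together with the product structure in the $[0,1]$ direction (local trivializations of $p$ over a chart $U\subset\Sigma$ cross with a short arc in $[0,1]$ to give local trivializations of $P$ over the corresponding chart of $\Sigma\rtimes_{\bar f}S^1$). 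This exhibits $M\rtimes_f S^1\cong M\rtimes_{\tilde f} S^1$ as a circle bundle over the mapping torus $\Sigma\rtimes_{\bar f} S^1$, as claimed.

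The main obstacle is the first step, namely that $f$ is isotopic to a fiber-preserving homeomorphism. This rests on the uniqueness (up to isotopy) of the Seifert fibration of $M$, which holds precisely because the base orbifold is hyperbolic; I would cite Scott's survey on the eight geometries, or Waldhausen's original paper on Seifert fiber spaces, for the precise statement.
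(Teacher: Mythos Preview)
Your argument is correct and follows the same overall strategy as the paper --- show that $f$ can be taken to be fiber-preserving, then descend to mapping tori --- but the justification of the first step is genuinely different. You invoke Waldhausen's uniqueness of Seifert fibrations for large Seifert manifolds to isotope $f$ to a fiber-preserving homeomorphism $\tilde f$, obtaining the square $p\circ\tilde f=\bar f\circ p$ on the nose. The paper instead argues algebraically: since $C(\pi_1(M))=\pi_1(S^1)\cong\Z$ is preserved by $f_*$, the composition $\pi\circ f$ kills the fiber class on $\pi_1$, and asphericity of $\Sigma$ then produces $h\colon\Sigma\to\Sigma$ with $h\circ\pi\simeq\pi\circ f$; degree considerations force $h$ to be (homotopic to) a homeomorphism. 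Your geometric route has the advantage that the commuting square holds strictly, so local triviality of $P$ is immediate, whereas the paper's homotopy-level commutation technically needs one more step (isotoping $f$ to an honest bundle map, implicit in the references they cite) before the well-definedness and bundle structure of $\overline\pi$ go through. The paper's approach, on the other hand, is slightly more portable: the center argument works verbatim for circle bundles over any aspherical base with centerless fundamental group, without appealing to $3$-manifold-specific rigidity.
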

\begin{proof}
Let $f\colon M\longrightarrow M$ be a homeomorphism and $\pi\colon M\longrightarrow\Sigma$ be the bundle projection. We observe that $f$ is a bundle map covering a self-homeomorphism of $\Sigma$ \cite{KN,NeoIIPP}: Indeed, since the center $C(\pi_1(M))=\pi_1(S^1)=\Z$ (recall that $M\neq T^3$) and $f_*$ is surjective, we deduce that $(\pi\circ f)_*$ maps the infinite cyclic center of $\pi_1(M)$ to the trivial element in $\pi_1(\Sigma)$. Thus, by the asphericity of our spaces, there is a map $h\colon\Sigma\longrightarrow\Sigma$ such that $h\circ\pi=\pi\circ f$ up to homotopy. Since $f$ has non-zero degree, $h$ must have non-zero degree as well, and by the classification of (maps between) surfaces it must be homotopic to a homeomorphism. We have
\[
M\rtimes_f S^1= (M\times I)/((x,0)\sim(f(x),1))
\]
and
\[
\Sigma\rtimes_h S^1= (\Sigma\times I)/((x,0)\sim(h(x),1)).
\]
Thus, $M\rtimes_f S^1$ can be given an $S^1$-bundle structure with projection map
\begin{align*}
\begin{split}
\overline{\pi}\colon M\rtimes_f S^1 & \longrightarrow  \Sigma\rtimes_h S^1 \\
[(x,t)] & \ \mapsto \ [(\pi(x),t)].
\end{split}
\end{align*}
Indeed, since $h\circ \pi=\pi\circ f$, the map $\overline{\pi}$ is well-defined, namely
\[
\overline{\pi}([(x,0)])=[(\pi(x),0)]=[(h\circ \pi(x),1)]=[(\pi\circ f(x),1)]=\overline{\pi}([(f(x),1)]).
\]
\end{proof}

Since every circle bundle has zero simplicial volume~\cite{Gromov}, Lemma \ref{l:circlebundle} and Lemma \ref{l:finitecover} imply that $\|M\rtimes_f S^1\|=0$ for any homeomorphism $f\colon M\longrightarrow M$.

\subsubsection*{Non-trivial JSJ-decomposition} 

Finally, suppose that our irreducible aspherical 3-manifold $M$ does not possess a Thurston geometry. Then by~\cite{JS,Jo}, there is a non-empty finite collection of disjoint incompressible tori $\mathcal{T}$ such that each component of $M\setminus\mathcal{T}$ is either atoroidal and acylindrical or Seifert fibered; see~\cite{JS,Jo} for explanation of the terminology. If such a collection of tori is minimal, then it is unique up to isotopy, and it is called the JSJ-decomposition of $M$ (after Jacob-Shalen-Johannson~\cite{JS,Jo}). We also refer to the pieces of $M\setminus\mathcal{T}$ as JSJ-pieces. Furthermore, by~\cite{Lue}, we may assume, after possibly passing to a finite cover of $M$, that each JSJ-piece of $M$ is either hyperbolic or fibers over an oriented orbifold of negative Euler characteristic. (As before, we will show that the mapping torus of an iterate of a self-homeomorphism of a finite cover of $M$ has zero simplicial volume; cf.  Lemma \ref{l:finitecover}.)

Suppose now $f\colon M\longrightarrow M$ is an orientation preserving homeomorphism. If $\mathcal{T}$ is a JSJ-decomposition of $M$, then $f(\mathcal{T})$ is also a JSJ-decomposition and so, after isotoping $f$, we can assume that $f(\mathcal{T})=\mathcal{T}$. Thus, after iterating $f$, we may assume that $f$ sends each JSJ-torus and each JSJ-piece to itself. We thus obtain
\[
M\rtimes_f S^1= \mathop{\#_{T^3}}_{i=1 \ \ \ }^{s \ \ \ } M_i\rtimes_{f_i} S^1
\]
where $f_i=f|_{M_i}$, $i=1,...,s$. Since again by Mostow rigidity any homeomorphism of a complete finite volume hyperbolic 3-manifold is isotopic to a periodic map, we can, upon further iteration and isotopy, also suppose that $f$ restricts to the identity on each hyperbolic piece. Thus, if $M_1,...,M_k$ and $M_{k+1},...,M_s$ denote the hyperbolic and Seifert fibered pieces respectively of the JSJ-decomposition, then
by~\cite{BetAl}
\[
\|M\rtimes_f S^1\|\leq \sum_{i=1}^{k}\|M_i\times S^1,T^3\| + \sum_{i=k+1}^s\|M_{i}\rtimes_{f_{i}} S^1,T^3\|,
\]
where the simplicial volumes on the right-hand side of the inequality denote the relative simplicial volumes. Because of the $S^1$ factor, we clearly have $\|M_i\times S^1,T^3\| =0$ for all $i=1,...,k$. Finally, since each $M_i$, $i=k+1,...,s$, fibers over an orbifold with negative Euler characteristic, we deduce, as in Lemma \ref{l:circlebundle}, that each $M_{i}\rtimes_{f_{i}} S^1$ is a circle bundle over a mapping torus of that orbifold, showing therefore that $\|M_{i}\rtimes_{f_{i}} S^1,T^3\|=0$ for all $i=k+1,...,s$. This means that $\|M\rtimes_f S^1\|=0$ as required. 

\medskip

We have finished the proof that every mapping torus of an irreducible aspherical 3-manifold has zero simplicial volume.

\medskip

The proof of Theorem \ref{t:3-mfdoverS^1} is now complete.

\section{Proofs of Corollaries}\label{s:remarks}

\subsection{Fiber bundles in dimension four}

The proof of Corollary \ref{c:fiber4} is a combination of Theorem \ref{t:3-mfdoverS^1} and other known results:

\begin{proof}[Proof of Corollary \ref{c:fiber4}]
Let $M$ be a closed oriented 4-manifold, which is a fiber bundle over a closed manifold $B$, with fiber a closed manifold $F$.

If $\dim(F)=1$, i.e. $F=S^1$, then $\|M\|=0$, because circle bundles have zero simplicial volume by~\cite{Gromov}. 
If $\dim(F)=3$, then $\|M\|=0$ by Theorem \ref{t:3-mfdoverS^1}. Finally, assume that $\dim(F)=\dim(B)=2$. If $F=S^2$ or $B=S^2$, then $M$ is rationally inessential and so $\|M\|=0$ by \cite{Gromov}. If $F=T^2$, then $\|M\|=0$ by the amenability of the fundamental group of the fiber~\cite{Gromov}. If $B=T^2$, then $\|M\|=0$ by~\cite{Bow}. Finally, if both $F$ and $B$ are hyperbolic surfaces, then $\|M\|\geq\|F\times B\|>0$ by~\cite{Buch2} (positivity also follows by the weaker estimate $\|M\|\geq\|F\|\|B\|>0$ of \cite{HK}).
\end{proof}

\subsection{Mapping tori of higher dimensional manifolds}

Theorem \ref{t:3-mfdoverS^1} provides the only dimension (together with dimension two) where all mapping tori have zero simplicial volume:

\begin{proof}[Proof of Corollary \ref{c:mappingtorizero}]
In dimension two, the only closed oriented mapping torus is the the 2-torus which has zero simplicial volume. In dimension four, Theorem \ref{t:3-mfdoverS^1} tells us that every mapping torus has zero simplicial volume. In dimension three, an example of a mapping torus with non-zero simplicial volume is given by a hyperbolic 3-manifold $M$ that fibers over the circle with fiber a hyperbolic surface $\Sigma$. Finally, assume that $N$ is a hyperbolic manifold (or any manifold with $\|N\|>0$) of dimension $n\geq 2$. Then $M\times N$ has dimension $n+3\geq 5$, is a mapping torus of $\Sigma\times N$, and $\|M\times N\|\geq\|M\|\|N\|>0$ by~\cite{Gromov}.
\end{proof}

\subsection{Hyperbolicity of fundamental groups of mapping tori of 3-manifolds}

Theorem \ref{t:3-mfdoverS^1} together with Mineyev's work~\cite{Min1,Min2} implies that the fundamental group of any mapping torus of a rationally essential 3-manifold is never hyperbolic:

\begin{proof}[Proof of Corollary \ref{c:hyperbolic}]
Let $M$ be a closed 3-manifold and $M\rtimes_f S^1$ the mapping torus of a homeomorphism $f\colon M\longrightarrow M$.

If $M$ is irreducible, then by the description and the properties of the 3-manifold groups given in Section \ref{s:prime} it is easy to see that $\pi_1(M\rtimes_f S^1)$ is never hyperbolic, unless $\pi_1(M)$ is finite.

Assume now that $M$ is reducible and contains an aspherical summand in its prime decomposition. (Although not necessary, we can also assume that all aspherical summands are hyperbolic, otherwise $\pi_1(M)$, and hence $\pi_1(M\rtimes_f S^1)$, is not hyperbolic because it has a $\Z^2$-subgroup.) 
Suppose that  $\pi_1(M\rtimes_f S^1)$ is hyperbolic. By the existence of an aspherical summand in (a finite cover of) $M$, we deduce that $M$ and hence also $M\rtimes_f S^1$ are rationally essential. Now Mineyev's work~\cite{Min1,Min2} (see also~\cite{Gromov1}) implies that the comparison map from bounded cohomology to ordinary cohomology
\[
H_b^4(\pi_1(M\rtimes_f S^1);\Q)\longrightarrow H^4(\pi_1(M\rtimes_f S^1);\Q)
\]
is surjective. Thus, by the duality of the $\ell^1$-semi-norm and the bounded cohomology $\ell^\infty$-semi-norm~\cite{Gromov}, we deduce that $\|M\rtimes_f S^1\|>0$. But this contradicts Theorem \ref{t:3-mfdoverS^1}.

Finally, assume that $M$ is reducible and has no aspherical summands in its prime decomposition, i.e. $M=(\#_l S^2\times S^1)\#(\#_{i=1}^n S^3/Q_i)$, where $Q_i$ are finite groups. If $M=\R P^3\#\R P^3$, then $M$ is virtually $S^2\times S^1$ and so $\pi_1(M\rtimes_f S^1)$ is not hyperbolic. If $M\neq \R P^3\#\R P^3$, then $M$ is virtually a connected sum $\#_m S^2\times S^1$, for some $m\geq 2$ (as mentioned before, this covering corresponds to the kernel of the projection $\pi_1(M)\longrightarrow Q_1\times\cdots\times Q_n$). Now, by~\cite{BF1,Br}, $\pi_1(M\rtimes_f S^1)$ is hyperbolic if and only if $\pi_1((\#_mS^2\times S^1)\rtimes S^1)$ does not contain $\Z^2$.
\end{proof}

\subsection{Higher dimensions}

The situation of Theorem \ref{t:general} applies in particular to connected sums of hyperbolic manifolds of dimension greater than two.

\begin{proof}[Proof of Corollary \ref{c:connectedsumshyperbolic}]

Let $M=M_1\#\cdots\# M_n$ be a connected sum of closed oriented hyperbolic manifolds of dimension greater than two, and $f\colon M\longrightarrow M$ be a homeomorphism. Since each $\pi_1(M_i)$ is one-ended, Bass-Serre theory implies that, after possibly passing to a finite iterate of $f$, each $\pi_1(M_i)$ is mapped under $f_*$ to a conjugate of itself. Thus the composition of $f_*$ with a finite number of automorphisms of form (\ref{equ: slide M}) (without $F_m$) has type (1). Since moreover any mapping torus of a hyperbolic closed manifold of dimension greater than two has zero simplicial volume, we deduce by Theorem \ref{t:general} that $\|M\rtimes_f S^1\|=0$.
\end{proof}

\bibliographystyle{amsplain}

\begin{thebibliography}{10}

\bibitem{BF1}
M. Bestvina and M. Feighn, {\em A combination theorem for negatively curved groups}, J. Differential Geom., {\bf 35} (1992), 85--101.

\bibitem{Bon}
F. Bonahon, {\em Cobordism of automorphisms of surfaces}, Ann. Sci. \'Ecole Norm. Sup. {\bf 16} (1983), 237--270.

\bibitem{Bow}
J. Bowden, {\em Asymptotic properties of MMM-classes}, Geometry, dynamics, and foliations (2013), 283--300, Adv. Stud. Pure Math. {\bf 72}, Math. Soc. Japan, Tokyo, 2017.

\bibitem{Br}
P. Brinkmann, {\em Hyperbolic automorphisms of free groups}, Geom. Funct. Anal., {\bf 10} (2000), 1071--1089.

\bibitem{Buch}
M. Bucher, {\em Simplicial volume of locally symmetric spaces covered by $\SL_{3}\R/\mathrm{SO}(3)$}, Geom. Dedicata {\bf 125} (2007), 203--224.

\bibitem{Buch1}
M. Bucher, {\em The simplicial volume of closed manifolds covered by $\mathbb{H}^2\times\mathbb{H}^2$}, J. Topol., {\bf 1} (3) (2008), 584--602.

\bibitem{Buch2}
M. Bucher, {\em Simplicial volume of products and fiber bundles}, Discrete groups and geometric structures, 79--86, Contemp. Math. {\bf 501}, Amer. Math. Soc., Providence, RI, 2009.

\bibitem{BetAl}
M. Bucher, M. Burger, R. Frigerio, A. Iozzi, C. Pagliantini and M.B. Pozzetti, {\em Isometric embeddings in bounded cohomology}, J. Topology and Analysis, {\bf 6}(1), 2014, 1--25. 

\bibitem{Gromov}
M. Gromov, {\em Volume and bounded cohomology}, Inst. Hautes \'Etudes Sci. Publ. Math. {\bf 56} (1982), 5--99.

\bibitem{Gromov1}
M. Gromov, {\em Hyperbolic groups}, in “Essays in Group Theory”, Math. Sci. Res. Inst. Publ., Springer, New York-Berlin {\bf 8} (1987), 75--263.

\bibitem{HK}
M. Hoster and D. Kotschick, {\em On the simplicial volumes of fiber bundles}, Proc. Amer. Math. Soc. {\bf 129} no. 4 (2001), 1229--1232.

\bibitem{IY}
H. Inoue and K. Yano, {\em The Gromov invariant of negatively curved manifolds}, Topology {\bf 21} no. 1 (1981), 83--89.

\bibitem{JS}
W. Jaco and P. Shalen, {\sl Seifert fibered spaces in 3-manifolds}, Mem. Amer. Math. Soc. {\bf 21} no. 220, 1979.

\bibitem{Jo}
K. Johannson, {\sl Homotopy equivalences of 3-manifolds with boundaries}, Lecture Notes in Mathematics {\bf 761} Springer, Berlin, 1979.

\bibitem{KN}
D.~Kotschick and C.~Neofytidis, {\em On three-manifolds dominated by circle bundles}, Math. Z. {\bf 274} (2013), 21--32.

\bibitem{LS}
J.-F. Lafont and B. Schmidt, {\em Simplicial volume of closed locally symmetric spaces of non-compact type}, Acta Math., {\bf 197} (2006), 129--143.

\bibitem{LN}
T.-J. Li and Y. Ni, {\em Virtual Betti numbers and virtual symplecticity of 4-dimensional mapping tori}, Math. Z. {\bf 277} (2014), 195--208.

\bibitem{Lue}
J. Luecke, {\em Finite covers of 3-manifolds containing essential tori}, Trans. Am. Math. Soc. {\bf 310} (1) (1988), 381--391.

\bibitem{Mc} 
D. McCullough, {\em Mappings of reducible 3-manifolds}, Proceedings of the Semester in Geometric and Algebraic Topology, Warsaw, Banach Center (1986), 61--76.

\bibitem{Min1}
I. Mineyev, {\em Straightening and bounded cohomology of hyperbolic groups}, Geom. Funct. Anal. {\bf11} (2001) 807--839.

\bibitem{Min2}
I. Mineyev, {\em Bounded cohomology characterizes hyperbolic groups}, Q. J. Math. 53 no. {\bf 1} (2002), 5--73.

\bibitem{NeoIIPP}
C. Neofytidis, {\em Fundamental groups of aspherical manifolds and maps of non-zero degree}, Groups Geom. Dyn. {\bf 12} (2018), 637--677.

\bibitem{NW}
C. Neofytidis and S. Wang, {\em Invariant incompressible surfaces in reducible 3-manifolds}, Ergodic Theory Dynam. Systems  {\bf 39} (2019), 3136--3143.

\bibitem{Savage}
R. Savage, {\em The space of positive definite matrices and Gromov's invariant}, Trans. Amer. Math. Soc. 274 (1982), no. 1, 239--263.

\bibitem{Scott}
P. Scott, {\em The geometries of 3-manifolds}, Bull. London Math. Soc. {\bf 15} (1983) 401--487.

\bibitem{Th1}
W.~Thurston, {\sl Three-Dimensional Geometry and Topology}, Princeton University Press, 1997.

\bibitem{Zhao}
X. Zhao, {\em On the Nielsen numbers of slide homeomorphisms on 3-manifolds}, Topology Appl. {\bf 136}, no. 1-3 (2004), 169--188.

\end{thebibliography}

\end{document}